\providecommand{\BBb}[1]{{\mathbb{#1}}}
\providecommand{\cal}[1]{{\mathcal{#1}}} 
\newcommand{\B}{{\BBb B}}
\newcommand{\C}{{\BBb C}}
\newcommand{\dual}[2]{\langle\,#1,\,#2\,\rangle}
\newcommand{\fracc}[2]{{
                \textstyle\frac{#1}{\raise 1pt\hbox{$\scriptstyle #2$}}}}
\newcommand{\fracp}{\fracc1p}
\newcommand{\fracnp}{\fracc np}
\newcommand{\fracci}[2]{{\frac{#1}{\raise 1pt\hbox{$\scriptscriptstyle #2$}}}}
\newcommand{\fracpi}{\fracci1p}
\newcommand{\im}{\operatorname{i}}
\newcommand{\loc}{\operatorname{loc}}
\newcommand{\nrm}[2]{\|#1\|_{#2}}
\newcommand{\Nrm}[2]{\bigl\|#1\bigr\|_{#2}}
\newcommand{\norm}[2]{\mathinner{\|}#1\,|#2\|}
\newcommand{\Norm}[2]{\mathinner{\bigl\|\,#1\,\big|#2\bigr\|}}
\newcommand{\op}[1]{\operatorname{#1}}
\newcommand{\N}{\BBb N}
\newcommand{\R}{{\BBb R}}
\newcommand{\Rn}{{\BBb R}^{n}}
\newcommand{\scal}[2]{(\,#1\,|\, #2\,)}
\newcommand{\supp}{\operatorname{supp}}
\newcommand{\Z}{\BBb Z}
\renewcommand{\hat}[1]{\overset{{\scriptscriptstyle \wedge}}{#1}}
\numberwithin{equation}{section}
\newtheorem{thm}{Theorem}
\numberwithin{thm}{section}
\newtheorem{prop}[thm]{Proposition}
\newtheorem{lem}[thm]{Lemma}
\newtheorem{cor}[thm]{Corollary}
\newtheorem*{tthm}{{\bf Theorem\/}}
\theoremstyle{definition}
\theoremstyle{remark}
\newtheorem{rem}[thm]{Remark}
\title[PS.D.O.s and Triebel--Lizorkin spaces]{Domains 
of pseudo-differential operators:\\ a case for the
Triebel--Lizorkin spaces}
\author{Jon Johnsen}
\address{Dept. of Mathematics\\
Aalborg University\\
Fredrik Bajers Vej~7G\\ DK-9220 Aalborg {\O}st\\ Denmark}
\email{jjohnsen@math.aau.dk}
\keywords{Type $1,1$-operators, Triebel--Lizorkin spaces, twisted diagonal,
support rule}
\subjclass[2000]{Primary 47G30; secondary 46E35}
\begin{document}

 \begin{abstract}
The main result is that every pseudo-differential operator
of type $1,1$ and order $d$ is continuous from the Triebel--Lizorkin space
$F^d_{p,1}$ to $L_p$, $1\le p<\infty$, and that this is optimal within
the Besov and Triebel--Lizorkin scales. The proof also leads to the
known continuity for $s>d$, while for all real $s$
the sufficiency of H{\"o}rmander's condition on the twisted diagonal is
carried over to the Besov and Triebel--Lizorkin framework.
To obtain this, type $1,1$-operators are extended to distributions with
compact spectrum, 
and Fourier transformed operators of this type are on such distributions
proved to satisfy a support rule, 
generalising the rule for convolutions.
Thereby the use of reduced symbols, as introduced by Coifman and Meyer,
is replaced by direct application of the paradifferential methods.
A few flaws in the literature have been detected and corrected.
 \end{abstract}

\maketitle
\section{Introduction}   \label{intr-sect}

At first glance this article's title may seem rather unmotivated:
for symbols $a$ in H{\"o}rmander's class
$S^{d}_{\rho,\delta}(\Rn\times\Rn)$, ie for $a\in C^\infty(\R^{2n})$ such
that 
\begin{equation}
  |D^\alpha_\xi D^\beta_x a(x,\xi)|\le 
   C_{\alpha\beta} (1+|\xi|)^{d-\rho|\alpha|+\delta|\beta|},
  \label{Sdrd-eq}
\end{equation}
it is well known that for $0\le \delta\le\rho\le1$
the operators 
\begin{equation}
  a(x,D)u(x)=\op{OP}(a)u(x)=(2\pi)^{-n}\!\int 
  e^{\im x\cdot\xi} a(x,\xi)\hat u(\xi)\,d\xi
  \label{axD-eq}
\end{equation}
map the Schwartz space $\cal S(\Rn)$ continuously into itself. 
For $(\rho,\delta)\ne (1,1)$ the operators form a class 
invariant under passage to adjoints, 
and they extend in this way to continuous, `globally' defined operators 
\begin{equation}
  a(x,D)\colon \cal S'(\Rn)\to\cal S'(\Rn).
  \label{aS-eq}
\end{equation}

But for $\rho=\delta=1$ the domain situation is different,
for Ching \cite{Chi72} showed the
existence of $a\in S^0_{1,1}$ such that $a(x,D)$ doesn't belong to
$\B(L_2(\Rn))$. That all operators in $\op{OP}(S^0_{1,1})$ are bounded
on $C^s$ and $H^s$ for $s>0$ was first proved by Stein, albeit in
unpublished work 
(cf Meyer \cite{Mey80} resp.\ H{\"o}rmander \cite{H88} for this). 
Continuity $H^{s+d}_p \to H^{s}_p$ for $s>0$, $1<p<\infty$ is due to Meyer
\cite{Mey80,Mey81}. 

Bourdaud analysed adjoints of $\op{OP}(S^0_{1,1})$, and
\cite[Thm.~3]{Bou88} lead to criteria 
for a given $S^0_{1,1}$-operator to 
be bounded on $H^s_p$ for \emph{all} $s\in\R$.
For $d\in\R$ and $p=2$,
H{\"o}rmander related this question more directly to the symbol's
properties, eg via the following sufficient condition: if the
partially Fourier transformed symbol $\hat a(\xi,\eta)=\cal
F_{x\to\xi}a(x,\eta)$ 
vanishes in a conical neighbourhood of a non-compact part of
the \emph{twisted diagonal}
$\{\,(\xi,\eta)\mid \eta=-\xi\,\}$, ie for some constant
$C\ge1$ fulfils 
\begin{equation}
  \hat a(\xi,\eta)=0 \quad\text{for}\quad 
  C(|\xi+\eta|+1)\le |\eta|,
  \label{H-cnd}
\end{equation}
then $a(x,D)$ is bounded $H^{s+d}\to H^{s}$ for all $s\in\R$; cf \cite{H88}.

However, not all symbols $a\in S^d_{1,1}$ 
fulfill \eqref{H-cnd} (cf \cite{Chi72} or \eqref{ching-eq} below),
so it is natural to ask whether a maximal 
domain of definition of $a(x,D)$ exists; clearly there is no such among
the $H^s$ with $s>0$. The next result 
gives affirmative answers by means of the Triebel--Lizorkin scale
$F^{s}_{p,q}(\Rn)$. 

\begin{tthm}
  \label{Fp1-thm}
Every $a\in S^{d}_{1,1}(\Rn\times\Rn)$, $d\in\R$, 
yields a bounded operator 
\begin{align}
  a(x,D)&\colon F^{d}_{p,1}(\Rn)\to L_p(\Rn)
         \quad\text{for}\quad p\in[1,\infty[,
  \label{Fp1-eq}\\
  a(x,D)&\colon B^{d}_{\infty,1}(\Rn)\to L_\infty(\Rn).
  \label{Bi-eq}
\end{align}
The class $\op{OP}(S^d_{1,1})$ contains operators 
$a(x,D)\colon \cal S(\Rn)\to\cal
D'(\Rn)$, that are discontinuous when $\cal S(\Rn)$ is given the induced
topology from any of the Triebel--Lizorkin spaces $F^d_{p,q}(\Rn)$
or Besov spaces $B^d_{p,q}(\Rn)$ with $p\in [1,\infty]$ and
$q\in\,]1,\infty]$ (while $\cal D'$ has the usual topology).
\end{tthm}

In particular, for fixed $p\in[1,\infty[\,$, all operators in
$\op{OP}(S^d_{1,1})$ are
bounded $F^d_{p,1}\to L_p$, but on 
any larger space in the $B^{s}_{p,q}$- and $F^{s}_{p,q}$-scales they will
(whatever the codomain) in general only
be densely defined, unbounded.

To elucidate this, note that by the results cited above there is
continuity $H^{s+d}_p\to L_p$ for every $s>0$, but not in general for
$s=0$. It is well known that $H^s_p=F^s_{p,2}$ for $1<p<\infty$, $s\in \R$, 
so it could be natural to search for maximal domains among the more general 
Triebel--Lizorkin spaces $F^d_{p,q}$; here
$F^d_{p,1}$ is a candidate by \eqref{Fp1-eq}. On the larger spaces 
$F^d_{p,q}$ with $q>1$ the theorem yields
that operators in $\op{OP}(S^{d}_{1,1})$ cannot be
continuous. Moreover, in the Besov scale, $B^d_{p,1}\subset F^d_{p,1}$ for
$1\le p<\infty$, and also here spaces with $q>1$ are too large, in view of
the theorem. In this sense the theorem is sharp for $1\le
p<\infty$. 

\begin{rem}
  \label{Fspq-rem}
In $L_p$-theory of, say partial differential equations 
$H^s_p$-spaces are natural (eg $H^s_p=W^s_p$ for integer $s\ge0$),
but it is well known that other $L_p$-based scales must show up
too. Eg the trace $f(x',x_n)\mapsto f(x',0)$
is a surjection
\begin{equation}
  H^s_p(\Rn)\to B^{s-\fracpi}_{p,p}(\R^{n-1})
  \quad\text{for}\quad s>\fracp,\ 1<p<\infty;
\end{equation}
hence Besov spaces are \emph{inevitable}
in $L_p$-theory of boundary problems. 

Arguments in favour of \emph{Triebel--Lizorkin} spaces have,
perhaps, been less compelling. Although $F^s_{p,2}=H^s_p$ for
$1<p<\infty$, it could be argued that this need not make
the $F^{s}_{p,q}$-scale a \emph{useful} extension of 
the $H^s_p$-spaces; indeed, many properties of $F^{s}_{p,q}$ do not depend
on $q$, and some technicalities would be avoided by fixing $q=2$.
But the theorem shows that also $F^{s}_{p,q}$-spaces with $q=1$ 
are indispensable for a natural $L_p$-theory, also for $p=2$. 
\end{rem}

\subsection{Other mapping properties}
For continuity $F^{s+d}_{p,q}\to F^{s}_{p,q}$ with $s>0$, 
a few minor modifications of the
inequalities in the theorem's proof yield estimates implying
\eqref{Fs-eq}--\eqref{Bs-eq} below.
This proof should also be interesting because 
H{\"o}rmander's condition \eqref{H-cnd} is extended to the $F^{s}_{p,q}$- 
and $B^{s}_{p,q}$-scales by a mere \emph{addendum}
to the argument for \eqref{Fs-eq}--\eqref{Bs-eq}:

\begin{cor}
  \label{cont-cor}
Every $a(x,D)\in \op{OP}(S^{d}_{1,1}(\Rn\times\Rn))$ restricts for $s>0$
and $p$, $q\in[1,\infty]$ to a continuous map
\begin{align}
  a(x,D)&\colon F^{s+d}_{p,q}(\Rn)\to F^{s}_{p,q}(\Rn), 
    \quad\text{for}\quad p<\infty,
  \label{Fs-eq} \\
  a(x,D)&\colon B^{s+d}_{p,q}(\Rn)\to B^{s}_{p,q}(\Rn).
  \label{Bs-eq} 
\end{align}
If in addition \eqref{H-cnd} holds, then both \eqref{Fs-eq} and
\eqref{Bs-eq} are valid for all $s\in\R$.
\end{cor}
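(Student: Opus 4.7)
The plan is to revisit the proof of the theorem and reorganise its estimates in the topology of $F^{s+d}_{p,q}$ instead of $F^{d}_{p,1}$. Decompose the symbol dyadically in $\xi$, writing $a(x,\xi)=\sum_{j\ge 0}a^{j}(x,\xi)$ with each $a^{j}$ supported in a corona $\{|\xi|\sim 2^{j}\}$ (modified at $j=0$), and introduce the usual paradifferential split
\[
a(x,D)u=a^{(1)}(x,D)u+a^{(2)}(x,D)u+a^{(3)}(x,D)u
\]
by sorting the pairs $(j,k)$ in the double sum $\sum_{j,k}a^{j}(x,D)u_{k}$, where $u_{k}$ denotes the $k$-th Littlewood--Paley block of $u$, according to the relative size of $j$ and $k$.

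First I would treat the off-diagonal contributions $a^{(1)}(x,D)u$ (the terms with $j\gg k$) and $a^{(3)}(x,D)u$ (the terms with $k\gg j$). The support rule established earlier in the paper shows that the $\ell$-th piece of each of these has $\xi$-spectrum contained in an annulus comparable to $\{|\xi|\sim 2^{\ell}\}$, so the Triebel--Lizorkin and Besov norms are controlled directly by the standard Littlewood--Paley characterisations. Combined with the symbol estimates inherited from $S^{d}_{1,1}$ and the Fefferman--Stein maximal inequality on $L_{p}(\ell_{q})$ (or Minkowski's inequality in the Besov case), this yields $\nrm{a^{(1)}(x,D)u+a^{(3)}(x,D)u}{F^{s}_{p,q}}\le C\nrm{u}{F^{s+d}_{p,q}}$ for every $s\in\R$, and analogously in the Besov scale.

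The main difficulty is the diagonal sum $a^{(2)}(x,D)u$, whose $\ell$-th piece only has $\xi$-spectrum in a \emph{ball} of radius $O(2^{\ell})$ rather than in an annulus, so a simple Littlewood--Paley reading is unavailable. Applying Nikol'skij--Plancherel--Polya inequalities and a standard reordering, one bounds this sum by a geometric series with ratio $2^{-s}$, which converges exactly when $s>0$; this is where the hypothesis $s>0$ in \eqref{Fs-eq}--\eqref{Bs-eq} really enters, and it is what prevents the case $s=0$ from being reached by this argument alone.

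For the addendum the key observation is that \eqref{H-cnd} confines $\hat{a}(\xi,\eta)$ away from the twisted diagonal, so the support rule forces every term entering $a^{(2)}(x,D)u$ to have $\xi$-spectrum in an annulus of size comparable to $2^{\ell}$ rather than in a ball. Once annular localisation is restored on $a^{(2)}$ as well, all three pieces satisfy the same Littlewood--Paley-type estimate, the restriction $s>0$ becomes unnecessary, and the bounds extend to every $s\in\R$. I expect the delicate step to be matching the conical neighbourhood in \eqref{H-cnd} against the dyadic scale so that the support rule really bites term-by-term on $a^{(2)}$; this is essentially where the paper's replacement of the reduced-symbol technique by direct paradifferential methods pays off.
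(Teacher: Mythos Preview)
Your high-level strategy --- annular spectra for the off-diagonal paraproduct pieces (giving estimates valid for all $s$), ball spectra only for the diagonal piece (forcing $s>0$ via Lemma~\ref{F-lem}), and under \eqref{H-cnd} an upgrade of the diagonal piece to annular localisation --- matches the paper's. But your setup contains a real error. You decompose the symbol dyadically \emph{in $\xi$}, writing $a=\sum_j a^j$ with $a^j$ supported where $|\xi|\sim 2^j$, and then sort the double sum $\sum_{j,k}a^j(x,D)u_k$ by the relative size of $j$ and $k$. Since $\hat u_k$ is also supported where $|\xi|\sim 2^k$, the term $a^j(x,D)u_k$ vanishes unless $|j-k|\le 1$: your ``off-diagonal'' regimes $j\gg k$ and $k\gg j$ are empty, and the double sum collapses to a single sum. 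Moreover, the support rule gives no useful localisation for the surviving terms $a^k(x,D)u_k$, because you have not cut $\hat a$ in its first variable, so the $\xi$ in $\{\xi+\eta:(\xi,\eta)\in\supp\hat a,\ \eta\in\supp\hat u_k\}$ ranges over all of $\Rn$.

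The paper's decomposition cuts instead in the \emph{$x$-variable} of the symbol, setting $a_{j,k}(x,\eta)=\Phi_j(D_x)a(x,\eta)\,\tilde\Phi_k(\eta)$, so that $j$ indexes the symbol's $x$-frequency. It is the interplay between this $j$ and the $u$-frequency $k$ that creates the three regimes, and it is the compact support of $\Phi_j(\xi)\hat a(\xi,\eta)$ in $\xi$ that makes the support rule produce the annulus/ball inclusions \mbox{(S1)}--\mbox{(S3)}. Once this is corrected, the remainder of your outline coincides with the paper's proof (your $a^{(1)}$ and $a^{(3)}$ are swapped relative to the paper's labelling, but that is harmless). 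One further detail you gloss over: under \eqref{H-cnd} the annular inclusion \mbox{(S2')} holds only for $k$ sufficiently large, and the finitely many low-$k$ terms of $a^{(2)}(x,D)u$ are disposed of separately as elements of $\bigcap_{\sigma>0}F^{\sigma}_{p,\infty}$.
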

The corollary has a version with $p$, $q\in\,]0,\infty]$
if only $s>\max(0,\fracnp-n)$, as accounted for in
Section~\ref{quasi-sect} below (this partially removes a well-known obstacle
in the use of $F^{s}_{p,q}$-spaces).
A more far-reaching extension result is

\begin{prop}   \label{E'-prop}
Any $A$ in $\op{OP}(S^\infty_{1,1})$ is a map 
$A\colon \cal F^{-1}\cal E'(\Rn)\to \cal S'(\Rn)$, with range contained in 
$O_M(\Rn)$, ie in the space of $f\in C^\infty(\Rn)$ fulfilling estimates
$|D^\alpha f(x)|\le c_\alpha(1+|x|)^{N_\alpha}$ for all $\alpha\in N^n_0$.
\end{prop}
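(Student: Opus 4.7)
The plan is to extend the oscillatory-integral formula \eqref{axD-eq} from $\cal S(\Rn)$ to $\cal F^{-1}\cal E'(\Rn)$ by reading the $\xi$-integral as a distributional pairing against a compactly supported test function that depends smoothly on $x$. Given $u\in\cal F^{-1}\cal E'(\Rn)$, I choose $\chi\in C^\infty_c(\Rn)$ with $\chi\equiv 1$ on an open neighbourhood of $\supp\hat u$, and set
$$
 Au(x):=(2\pi)^{-n}\dual{\hat u(\xi)}{\chi(\xi)e^{\im x\cdot\xi}a(x,\xi)}.
$$
If $\chi_1,\chi_2$ are two such cutoffs, then $\chi_1-\chi_2$ vanishes near $\supp\hat u$, so the definition is independent of $\chi$; for $u\in\cal S(\Rn)\cap\cal F^{-1}\cal E'(\Rn)$ it agrees with $a(x,D)u$ from \eqref{axD-eq}, read as an absolutely convergent integral.

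Next I would verify smoothness and compute derivatives. For each fixed $x$ the test function $\xi\mapsto\chi(\xi)e^{\im x\cdot\xi}a(x,\xi)$ lies in $C^\infty_c(K)$, $K:=\supp\chi$, and depends $C^\infty$-ly on $x$ in the topology of $\cal D(K)$, so $Au\in C^\infty(\Rn)$ and $x$-differentiation may be moved inside the pairing. By Leibniz and $D^\alpha_x e^{\im x\cdot\xi}=\xi^\alpha e^{\im x\cdot\xi}$,
$$
 D^\alpha_x Au(x)=(2\pi)^{-n}\sum_{\beta\le\alpha}\binom{\alpha}{\beta}
  \dual{\hat u(\xi)}{\chi(\xi)\xi^{\alpha-\beta}e^{\im x\cdot\xi}(D^\beta_x a)(x,\xi)}.
$$

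For the polynomial growth, I use that $\hat u\in\cal E'$ has some finite order $N$, hence satisfies $|\dual{\hat u}{\varphi}|\le C\sum_{|\mu|\le N}\sup_{\xi\in K}|\partial^\mu_\xi\varphi(\xi)|$ for all $\varphi\in C^\infty_c(K)$. Applied to each summand above, and distributing $\partial^\mu_\xi$ by Leibniz over the four factors $\chi(\xi)$, $\xi^{\alpha-\beta}$, $e^{\im x\cdot\xi}$ and $(D^\beta_x a)(x,\xi)$, the symbol bounds \eqref{Sdrd-eq} for $a\in S^d_{1,1}$ collapse to estimates uniform in $x$ once $\xi$ is confined to the compact set $K$; the factors $\chi(\xi)$ and $\xi^{\alpha-\beta}$ are bounded on $K$; and each $\partial_\xi$ landing on $e^{\im x\cdot\xi}$ contributes a factor of order $|x|$, with at most $N$ such derivatives occurring. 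Hence $|D^\alpha_x Au(x)|\le C_\alpha(1+|x|)^N$ with the exponent $N$ independent of $\alpha$, so $Au\in O_M(\Rn)\subset\cal S'(\Rn)$.

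The main obstacle is merely the careful bookkeeping in the last estimate: one must check that the joint $(x,\xi)$-bounds \eqref{Sdrd-eq} furnish $x$-uniform control of $a$ and all its $x$-derivatives once $\xi$ varies in the compact set $K$, and identify the $\xi$-derivatives of $e^{\im x\cdot\xi}$ as the sole source of $x$-growth, governed entirely by the finite order $N$ of the distribution $\hat u$.
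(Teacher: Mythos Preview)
Your argument is correct and shares the paper's key idea: insert a cutoff $\chi\in C^\infty_c(\Rn)$ equal to $1$ near $\supp\hat u$, so that effectively only the compactly-$\xi$-supported symbol $a(x,\xi)\chi(\xi)\in S^{-\infty}$ matters. The execution, however, differs. The paper observes that $b:=a\chi\in S^{-\infty}$ and then quotes the known fact (from \cite{SRay91}) that $\op{OP}(S^{-\infty})$ maps all of $\cal S'(\Rn)$ into $O_M(\Rn)$; independence of the cutoff is obtained by approximating $u$ in $\cal S'$ by Schwartz functions whose spectra stay near $\supp\hat u$. You instead work directly with the pairing $\dual{\hat u}{\chi(\cdot)e^{\im x\cdot\,\cdot}a(x,\cdot)}$, prove $Au\in O_M$ by hand via the finite order of $\hat u\in\cal E'$, and dispose of the cutoff dependence by the one-line support observation. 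Your route is more self-contained and yields the sharper conclusion that the growth exponent $N$ may be taken independent of $\alpha$ (namely equal to the order of $\hat u$); the paper's route is shorter once the cited mapping property is granted, and it also records the compatibility with the usual $\cal S'$-extension for $a\in S^\infty_{1,0}$, which you might add for completeness.
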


This shows that every type $1,1$-operator is
defined on a `large' space, and that the non-extendability to $\cal S'(\Rn)$
comes from distributions with ``high-frequency oscillations'' 
(corresponding to the fact that it is the distant part of the twisted
diagonal that matters).

\subsection{The methods of proof}
In Sections~\ref{brdl-sect}--\ref{cont-sect} below
the paradifferential approach is used for the proofs of the theorem and its
corollary. 
On the one hand, this strategy is well known and has been widely adopted for
$L_p$-questions, eg in works of Meyer, Bui Huy Qui,
Bourdaud,  Marschall and Yamazaki \cite{Mey80,Mey81,Bui,Bou88,Mar85,Y1}
(the list is by no means exhaustive), 
and here it was combined with the
density of \emph{reduced} symbols. This notion is due to Coifman and
Meyer \cite[Sec.~2.6]{CoMe78}, who in the proof of
\cite[Thm.~2.6.9]{CoMe78} used it to facilitate 
spectral estimation of terms like
$b(x,D)v$; in fact, reduced symbols have the form
$b(x,\xi)=\sum_{j=0}^\infty m_j(x)\varphi(2^{-j}\xi)$ for a 
$C^\infty$-function $\varphi$ supported in a corona around the origin 
and a bounded set of uniformly continuous
$L_\infty$-functions $m_j$, and for such symbols, inclusions of the support of
$\cal F(b(x,D)v)$ into balls and annuli was easily obtained.
 
On the other hand, however,
the combination of reduced symbols and paradifferential
techniques amounts to \emph{two} limit processes, 
which together make the action of $a(x,D)$ rather
intransparent. In order to avoid this drawback,
the arguments are here carried out \emph{directly} on the given symbols in 
$S^\infty_{1,1}$ and distributions $u$, without recourse to density of
reduced symbols or of Schwartz functions
(preferable since $\cal S(\Rn)$ is
not dense in eg $B^d_{\infty,1}(\Rn)$).
In doing so, the spectral estimates necessary for the paradifferential
approach are now obtained 
by means of Proposition~\ref{supp-prop} below.

Among the earlier contributions, reduced symbols are also not used in
\cite{Run85ex,Mar96},
but various flaws in these papers have been detected and corrected with the
present work; cf Remarks~\ref{Marsch-rem} and \ref{Runst-rem} below. 
 
To explain the direct approach in more detail, 
it is noted that
Corollary~\ref{cont-cor} also relies on convergence
criteria for series of distributions with spectral conditions, cf
Lemma~\ref{F-lem} below. It is therefore essential to have
control over the spectrum of $b(x,D)v$ for rather general $b$ and $v$.
For $b\in\cal S(\R^{2n})$ and $v\in\cal S(\Rn)$ this can be obtained at once,
since Fubini's theorem implies the well-known formula,
\begin{equation}
  \cal F(b(x,D)v)(\xi)
   =(2\pi)^{-n}\int \hat b(\xi-\eta,\eta)\hat v(\eta)\,d\eta.
  \label{Fbv-eq}
\end{equation}
For similar purposes H{\"o}rmander \cite[p.~1091]{H88}
extended \eqref{Fbv-eq} to symbols $b\in \cal S'(\R^{2n})$ with $v$
remaining in $\cal S(\Rn)$, noting that Schwartz' kernel theorem allows this
(one can eg apply \eqref{Fbv-eq} to a Schwartz function first).
But for the present direct treatment of symbols and distributions
\emph{both} in $\cal S'\setminus\cal S$, this approach does not suffice.
It is also difficult to use limiting procedures,
because $\cal S$ is not dense in $B^d_{\infty,1}$, eg since
$v\equiv1$ lies there; here $\hat v=(2\pi)^n\delta_0$ 
that moreover would be demanding to make sense of in \eqref{Fbv-eq}
when also $\hat b$ can be a singular distribution.

However, generalising a familiar convolution technique, one has
the following result that, despite its classical nature, could be important
for the future Littlewood--Paley analysis of pseudo-differential operators: 

\begin{prop}[the support rule]
  \label{supp-prop}
For $b\in S^\infty_{1,1}(\Rn\times\Rn)$,
\begin{equation}
  \supp \cal F(b(x,D)v) \subset
  \bigl\{\,\xi+\eta \bigm| (\xi,\eta)\in \supp \hat b(\cdot,\cdot),
            \eta\in\supp \hat v \,\bigr\},
  \label{supp-eq}
\end{equation}
for every $v\in\cal F^{-1}\cal E'(\Rn)$.
\end{prop}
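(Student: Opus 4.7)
The plan is to generalise the classical identity \eqref{Fbv-eq} from $b\in\cal S(\R^{2n})$, $v\in\cal S(\Rn)$ to the present setting $b\in S^\infty_{1,1}$, $v\in\cal F^{-1}\cal E'(\Rn)$, and then read off the support inclusion in the same way that $\supp(u*v)\subset\supp u+\supp v$ is deduced from the convolution integral --- this is the ``familiar convolution technique'' alluded to in the discussion preceding the statement.

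The meaning of $b(x,D)v$ in the present setting is
\[
  b(x,D)v(x) = (2\pi)^{-n}\langle \hat v(\eta), e^{\im x\cdot\eta}b(x,\eta)\rangle_\eta,
\]
which lies in $O_M(\Rn)\subset\cal S'(\Rn)$ by Proposition~\ref{E'-prop}, so $\cal F(b(x,D)v)\in\cal S'(\Rn)$. Testing against $\phi\in\cal S(\Rn)$ and pairing the $O_M$-distribution $b(x,D)v$ with the Schwartz function $\hat\phi$,
\[
  \langle \cal F(b(x,D)v),\phi\rangle
  = (2\pi)^{-n}\int \hat\phi(x)\langle \hat v(\eta),e^{\im x\cdot\eta}b(x,\eta)\rangle_\eta\,dx.
\]
The key step is to exchange the $x$-integral with the $\hat v$-pairing in $\eta$. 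This is legitimate because $\supp\hat v$ is compact and $\hat v$ has finite order, while the integrand is $C^\infty$ in $\eta$ with all $\eta$-derivatives bounded, uniformly for $\eta$ in a neighbourhood of $\supp\hat v$, by quantities of the form $|\hat\phi(x)|(1+|x|)^M$ coming from the $S^d_{1,1}$-estimates on $b$ (for the $d$ with $b\in S^d_{1,1}$). After the exchange and an elementary Fourier computation on the inner integral, one reaches
\[
  \langle \cal F(b(x,D)v),\phi\rangle = (2\pi)^{-n}\langle \hat v(\eta),f(\eta)\rangle,\qquad f(\eta) := \langle \hat b(\xi,\eta),\phi(\xi+\eta)\rangle_\xi.
\]
Here $f\in C^\infty(\Rn)$ because $\eta\mapsto\hat b(\cdot,\eta)$ is a smooth map into $\cal S'(\Rn)$ ($b$ being $C^\infty$ in $\xi$ with symbolic bounds) and $\eta\mapsto\phi(\cdot+\eta)$ is smooth into $\cal S(\Rn)$. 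This identity is the desired extension of \eqref{Fbv-eq}.

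To conclude, fix $\zeta_0$ outside the right-hand side of \eqref{supp-eq}. Using compactness of $\supp\hat v$ and closedness of $\supp\hat b$, I produce an open neighbourhood $U$ of $\zeta_0$ and an open neighbourhood $W$ of $\supp\hat v$ with the property that for every $\eta\in W$ and every $\xi$ with $(\xi,\eta)\in\supp\hat b$, the sum $\xi+\eta$ lies outside $U$. Then for $\phi\in C_c^\infty(U)$ and $\eta\in W$, the support of $\phi(\cdot+\eta)$ is disjoint from the slice $\{\xi:(\xi,\eta)\in\supp\hat b\}$, so $f$ vanishes on $W$; since $\hat v\in\cal E'$ depends only on the germ of test functions near $\supp\hat v$, $\langle\hat v,f\rangle=0$, and therefore $\cal F(b(x,D)v)$ vanishes on $U$.

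I expect the main obstacle to be the rigorous justification of the exchange: although philosophically routine, it demands a careful bookkeeping of the $S^d_{1,1}$-bounds, the finite order of $\hat v\in\cal E'$, and the Schwartz decay of $\hat\phi$, ultimately assembling $L^\infty$-bounds on the $\eta$-derivatives that are $L^1$ in $x$. A secondary but necessary point is the ``slight enlargement'' in the final paragraph, where the openness of $W\supset\supp\hat v$ is essential --- easily arranged via compactness and the closedness of $\supp\hat b$, but not automatic.
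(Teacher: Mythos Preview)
Your approach is correct and genuinely differs from the paper's. You compute $\dual{\cal F(b(x,D)v)}{\phi}$ directly, exchange the $x$-integral with the $\hat v$-pairing, and arrive at the fiberwise formula $(2\pi)^{-n}\dual{\hat v(\eta)}{f(\eta)}$ with $f(\eta)=\dual{\hat b(\xi,\eta)}{\phi(\xi+\eta)}_\xi$. The paper instead introduces an auxiliary parameter $\tau$ via $b_\tau(x,\eta)=b(x,\eta-\tau)$ and rewrites $\cal F(b_\tau(x,D)v)$ as a genuine convolution in $\cal D'*\cal E'$ on $\R^{2n}$, namely $R\hat b*((\hat v\otimes\delta_0)\circ M)(\xi,\tau)$ for a partial reflection $R$ and a shear $M$; the support of this convolution is contained in the Minkowski sum of the factors' supports, and the proposition follows by taking the trace at $\tau=0$, both sides being in $C(\Rn,\cal D'(\Rn))$. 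The paper also first reduces to $b\in S^\infty_{1,0}$ via the cut-off from the proof of Proposition~\ref{E'-prop} and then passes to the limit from Schwartz symbols in the $S^{d+1}_{1,0}$-topology, whereas you work directly with $b\in S^\infty_{1,1}$.

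Your route is more elementary and closer in spirit to the usual proof that convolutions have additive supports; the paper's route turns that analogy into an actual identity on $\R^{2n}$, and its density/continuity arguments sidestep the bookkeeping you flag in your final paragraph. One point you leave implicit: concluding $f(\eta)=0$ from the disjointness you arrange requires that the fiberwise support of $\hat b(\cdot,\eta)$ (as a tempered distribution in $\xi$ for each fixed $\eta$) lie inside the $\eta$-slice of the joint support $\supp\hat b\subset\R^{2n}$; this holds because $\eta\mapsto\hat b(\cdot,\eta)$ is continuous into $\cal S'(\Rn)$, but it deserves a sentence.
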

Note that $v\in \cal F^{-1}\cal E'(\Rn)$ is meaningful 
for $b\in S^\infty_{1,1}$, by Proposition~\ref{E'-prop},
but that such $v$'s require more than \eqref{Fbv-eq}, since 
Proposition~\ref{E'-prop} contains no continuity, so that
eg density arguments are difficult to use.
(It is also not clear that \eqref{supp-eq} should follow
from results about wavefront sets, for the latter only account for
singularities in singular supports.)
Cf Section~\ref{supp-ssect} below for a proof that combines a convolution in
$\cal D'*\cal E'$ on $\R^{2n}$ with a trace argument. 

Somewhat surprisingly, the support rule seems to be hitherto undescribed in
the literature, even for classical symbols. (However, for reduced symbols
\eqref{supp-eq} is easy to obtain, as $\cal F(b(x,D)v)$ is a finite sum of
convolutions, $(2\pi)^{-n}\sum \hat m_j*(\varphi(2^{-j}\cdot) \hat v)$.)
At least for $b\in S^\infty_{1,1}$ the proposition  
is a novelty.

It is perhaps noteworthy that partially Fourier transformed symbols, such as
$\cal F_{x\to\xi} b(x,\eta)$, enter both the support rule \eqref{supp-eq}
and the twisted diagonal condition \eqref{H-cnd}. This could be natural
since \eqref{supp-eq} quite generally implies that the spectrum of $b(x,D)v$
cannot be larger than the combined frequencies in the symbol's $x$- and
$\eta$-dependencies. 

More specifically, Proposition~\ref{supp-prop} has as a
corollary, that if $b\in S^\infty_{1,0}$ and
$\supp\hat b\subset K'\times K''$ with $K''\Subset\Rn$, 
then
\begin{equation}
  \supp\cal F(b(x,D)v)\subset K'+K'',\quad\text{for}\quad  v\in\cal S'(\Rn).
\end{equation}
Indeed, for 
$\chi\in C^\infty_0$ such that $\chi\equiv1$ on $K''$, 
$b(x,D)v=b(x,D)\cal F^{-1}(\chi\hat v)$
by \eqref{axD-eq}--\eqref{aS-eq}, and $K'+K''$ results from \eqref{supp-eq}.

A brief review of the present paper has been given in
\cite{JJ04Dcr}.

\begin{rem}
  \label{hist-rem}
Consideration of $\op{OP}(S^d_{1,1}(\Rn\times\Rn))$ in
$F^{s}_{p,q}$-spaces was initiated by Runst \cite{Run85ex}, 
but unfortunately his proofs 
contained a flaw that one can correct by means of
Proposition~\ref{supp-prop}, cf Remark~\ref{Runst-rem} below. 
Seemingly Torres \cite{Tor90} was the first
to extend the $H^s_p$-continuity of \cite{Mey80,Mey81} to the
$F^s_{p,q}$-scale, using Frazier and Jawerth's $\varphi$-transformation 
\cite{FJ2}; Torres' results are improved in two respects cf
Remark~\ref{Torr-rem} below. 
The case $d=s=0$ was addressed by Bourdaud
\cite[Thm.~1]{Bou88},  
who showed continuity 
$B^0_{p,1}\to L_p$ for $1\le p\le\infty$; this is a special case of the
theorem since $B^0_{p,1}\subset F^0_{p,1}$ for $p\ge1$.
\end{rem}

\section{Linearisation and operators of type $1,1$}
  \label{Lin-sect}

The interest in type $1,1$-operators stems partly from the fact that they
appear in linearisations of non-linear functions. While settling 
the notation, this is recalled in the present section, and it is shown
that the theorem is easy to prove for operators in such linearisations.

\bigskip

When $F\in C^\infty(\R,\R)$ fulfils $D^kF\in L_\infty(\R)$ for $k\ge1$,
$F(0)=0$, the operator $u\mapsto F\circ u$, defined for
$u\in L_\infty(\Rn,\R)$, may be written as
\begin{equation}
  F(u(x))= a_u(x,D)u(x)
  \label{Fux-eq}
\end{equation}
for some $u$-dependent $a_u \in S^{0}_{1,1}(\Rn\times\Rn)$. To obtain this,
one may take a Littlewood--Paley decomposition $(\Phi_j)_{j\in \N_0}$, 
that is $\Phi_j\in  C^\infty(\Rn)$ with $\supp\Phi_j\Subset\Rn$
(where $A \Subset B$ means that $A$ has compact closure in $B$) and
\begin{gather}
  1\equiv \sum_{j=0}^\infty \Phi_j(\xi)  \\
 j>0\colon \xi\in\supp\Phi_j\implies \tfrac{11}{20}2^j\le |\xi|
   \le\tfrac{13}{10}2^j.
\end{gather}
Here one can set $\Phi_j(\xi)=\Psi_j(\xi)-\Psi_{j-1}(\xi)$ for $j\ge0$,
when $\Psi\in C^\infty(\R,\R)$ is chosen such that $\Psi(t)=1$ for
$t\le\tfrac{11}{10}$ and $\Psi(t)=0$ for $t\ge \tfrac{13}{10}$ and 
\begin{equation}
  \Psi_j(\xi)=\Psi(2^{-j}|\xi|), \qquad \Psi_{-1}\equiv0,
\end{equation}
for pointwisely $\Phi_0+\dots+\Phi_j=\Psi_j\to 1$. 
It is occasionally convenient to define eg
$\Phi=\Phi_j(2^j\cdot)$, which is independent of $j>0$; this extends to other
situations for simplicity's sake.

Using this, and setting
$v_j=\Phi_j(D)v$ and $v^j=\Psi_j(D)v$, one has $v=\sum_{j=0}^\infty v_j$ for
every tempered distribution $v$, and
\begin{equation}
  F(u(x))=\sum_{j=0}^\infty m_j(x)u_j(x)
  \label{Fum-eq}
\end{equation}
with multipliers $m_j(x)=\int_0^1 F'(u^{j-1}(x)+tu_j(x))\,dt$ as in
\cite{Mey81}. It was used there that \eqref{Fum-eq} for $u\in H^s_p$
with $s>\fracc np$ shows \eqref{Fux-eq} for
\begin{equation}
  a_u(x,D)v(x)=\sum_{j=0}^\infty m_j(x)\Phi_j(D)v(x).
  \label{au-id}
\end{equation}
Clearly $a_u(x,\xi)= \sum_{j=0}^\infty m_j(x)\Phi_j(\xi)$
is its symbol; the sum is locally finite, hence $C^\infty$.
Here $a_u\in S^{0}_{1,1}$, for if $j>0$ one has on $\supp \Phi_j$ 
that $2^j\doteq 1+|\xi|$ 
[ie, for some $c\ge1$, it holds that
$\tfrac{2^j}{c}\le|\xi|\le c2^j$],
so that $D^\alpha_\xi$ on $\Phi_j(\xi)=\Phi(2^{-j}\xi)$
produces the factor $2^{-j|\alpha|}$, estimated by
$c(1+|\xi|)^{-|\alpha|}$. Note also that
$\nrm{m_j}{\infty}\le\nrm{F'}{\infty}$ and $\nrm{D^\beta_xm_j}{\infty}
\le c(1+|\xi|)^{|\beta|}$.

Recall that the Triebel--Lizorkin space $F^{s}_{p,q}(\Rn)$ is defined
for $s\in\R$, $0<p<\infty$ and $0<q\le\infty$, as the set of 
$v\in\cal S'(\Rn)$ for which  
\begin{equation}
  \norm{v}{F^s_{p,q}}:=
  \nrm{(\sum_{j=0}^\infty 2^{sjq}|v_j(\cdot)|^q)^\fracci1q}{p}<\infty.
\end{equation}
This is a quasinorm for $p<1$ or $q<1$ (`quasi' will be suppressed below).
Here $\nrm{\cdot}{p}$ is the norm of $L_p(\Rn)$, $0<p\le\infty$, and for
$q=\infty$ the $\ell_q$-norm above should be replaced by an
$\ell_\infty$-norm (this is to be understood throughout when $q=\infty$
is included).  
The Besov space $B^{s}_{p,q}$ is defined by taking the $L_p$
norm of $2^{sj}v_j$ first, before the $\ell_q$-norm. 
General properties of the spaces are described in \cite{RuSi96, T2} or
\cite{Y1}. Below it is used that $F^t_{p,q}\hookrightarrow F^s_{p,q}$ for
$t>s$ and $F^s_{p,q}\hookrightarrow F^s_{p,r}$ for $0<q\le r\le\infty$.

For later reference, some important convergence criteria are
recalled from eg \cite[Prop.~2.3.2/2]{RuSi96} or \cite[Thm.~3.6--3.7]{Y1}, 
though for simplicity for $F^{s}_{p,q}$ and 
$p$, $q\in [1,\infty]$ only (cf Lemma~\ref{Fspq-lem} below).

\begin{lem}
  \label{F-lem}
Let $s>0$, $1\le p<\infty$ and $1\le q\le\infty$ and suppose 
$\sum_{j=0}^\infty u_j$ is a series 
in $\cal S'(\Rn)$ to which there exists $A>0$ such that, for $k\in \N_0$,
\begin{equation}
  \supp \cal F u_k\subset B(0,A2^k),\qquad
 F:=\Norm{(\sum_{j=0}^\infty
  2^{jsq}|u_j|^q)^{\fracci1q}(\cdot)}{L_p}<\infty.  
\end{equation}
Then $\sum u_j$ converges in $\cal S'(\Rn)$ to 
a limit $u\in F^{s}_{p,q}(\Rn)$  
and $\norm{u}{F^s_{p,q}}\le c\cdot F$ for a suitable constant $c$. 
Moreover, if $\cal F u_k(\xi)\ne0$ implies $\tfrac{1}{A}2^k\le|\xi|\le
A2^k$ for all $k>0$, then the result is valid for all $s\in\R$.
\end{lem}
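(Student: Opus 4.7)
The plan is to prove the lemma in two stages: establish convergence of $\sum u_j$ to some $u\in \cal S'(\Rn)$, then control $\norm{u}{F^s_{p,q}}$ by dyadic decomposition through $\Phi_k(D)$.

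For convergence, the pointwise bound $2^{js}|u_j(x)|\le (\sum_k 2^{ksq}|u_k(x)|^q)^{1/q}$ gives $\nrm{u_j}{p}\le 2^{-js}F$. Since $s>0$ and $p<\infty$, $\sum u_j$ converges absolutely in $L_p$, hence in $\cal S'$, defining $u$. For the norm estimate the key step is to analyse $\Phi_k(D)u$. The spectral hypothesis $\supp \cal F u_j\subset B(0,A2^j)$ combined with $\supp\Phi_k\subset\{\tfrac{11}{20}2^k\le|\xi|\le\tfrac{13}{10}2^k\}$ for $k\ge 1$ forces $\Phi_k(D)u_j=0$ unless $j\ge k-N_0$, with $N_0$ depending only on $A$; hence $\Phi_k(D)u=\sum_{j\ge k-N_0}\Phi_k(D)u_j$. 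In this surviving range $2^{k-j}\le 2^{N_0}$, so after rescaling the convolution kernel $\cal F^{-1}\Phi_k$ by $2^j$ one obtains a uniformly $L_1$-bounded family. Combined with Peetre's maximal inequality at scale $2^j$, this yields
\[
  |\Phi_k(D)u_j(x)|\le c\,(M|u_j|^r(x))^{1/r}
\]
for any $r>0$, with $c$ independent of $k,j$ and $M$ the Hardy--Littlewood maximal operator.

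Multiplying by $2^{ks}=2^{(k-j)s}2^{js}$ and summing gives, pointwise in $x$,
\[
  2^{ks}|\Phi_k(D)u(x)|\le c\sum_{j\ge k-N_0}2^{(k-j)s}\cdot 2^{js}(M|u_j|^r(x))^{1/r}.
\]
For $s>0$ the kernel $\ell\mapsto 2^{\ell s}\mathbf{1}_{\ell\le N_0}$ lies in $\ell^1(\Z)$, so Young's inequality in the $k$-variable yields
\[
  \Nrm{2^{ks}\Phi_k(D)u(x)}{\ell^q(k)}\le c\,\Nrm{2^{js}(M|u_j|^r(x))^{1/r}}{\ell^q(j)}.
\]
Choosing $r\in\,]0,\min(p,q)[$ so that $p/r>1$ and $q/r>1$, taking the $L_p$-norm and invoking the Fefferman--Stein vector-valued maximal inequality then produces $\norm{u}{F^s_{p,q}}\le cF$, as desired.

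For the annular case, the two-sided condition on $\supp\cal Fu_j$ forces $\Phi_k(D)u_j=0$ unless $|k-j|\le N_0$, so the convolution kernel in the preceding step has finite support and lies in $\ell^1(\Z)$ for every $s\in\R$; the rest of the argument is unchanged, giving the result for arbitrary $s$. The main technical obstacle is Peetre's maximal estimate, whose proof relies crucially on the confinement of $u_j$'s spectrum to a ball of known radius at scale $2^j$ so that the kernel can be rescaled uniformly in $k,j$; this is classical and can be cited from \cite{T2,Y1}.
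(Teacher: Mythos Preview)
The paper does not give a proof of this lemma; it is simply recalled from \cite{RuSi96,Y1}. So there is no argument in the paper to compare yours against, and the question is whether your proof stands on its own.

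There is a genuine gap in the ball case. Your central claim is that
\[
  |\Phi_k(D)u_j(x)|\le c\,(M|u_j|^r(x))^{1/r}
\]
with $c$ independent of $k,j$, for \emph{any} $r>0$. The rescaling you invoke shows only that the kernels $\check\Phi_k$ are uniformly bounded in $L_1$; that yields the uniform estimate $|\Phi_k(D)u_j|\le cM_1u_j$, but for $r<1$ one has $M_r\le M_1$, so this does \emph{not} imply the $M_r$-bound. To combine with Peetre's inequality at scale $2^j$ you actually need the weighted integral
$\int|\check\Phi_k(z)|(1+2^j|z|)^{n/r}\,dz$,
and after the substitution $w=2^kz$ this equals $\int|\check\Phi(w)|(1+2^{j-k}|w|)^{n/r}\,dw\sim 2^{(j-k)n/r}$ for $j\gg k$. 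A concrete witness: take $u_j(x)=2^{jn}\psi(2^j(x-x_0))$ with $|x_0|\sim 2^{-k}$ and $\psi$ a bump; then $|\Phi_k(D)u_j(0)|\sim 2^{kn}$ while $(M|u_j|^r(0))^{1/r}\sim 2^{jn}2^{-(j-k)n/r}$, so the ratio is $\sim 2^{(j-k)n(1/r-1)}\to\infty$. With the correct factor your Young kernel becomes $\ell\mapsto 2^{\ell s}2^{-\ell n/r}$ on $\ell<0$, summable only for $s>n/r$; since you must take $r<\min(p,q)$, this forces $s>n/\min(p,q)$ rather than $s>0$. In particular the argument breaks down precisely at the endpoints $p=1$ or $q=1$ that the lemma includes. (For $p,q>1$ one may take $r=1$, where the uniform bound does hold and Fefferman--Stein applies; so your scheme is salvageable there, but not at the boundary.)

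A smaller point: in the annular case with $s\le0$ you write ``the rest of the argument is unchanged'', but your convergence proof used $s>0$ explicitly. You still need to show $\sum u_j$ converges in $\cal S'$; this follows e.g.\ from $\nrm{u_j}{p}\le 2^{-js}F$ together with the rapid decay of $\nrm{\tilde\Phi_j(D)\varphi}{p'}$ for $\varphi\in\cal S$, since the annular support gives $u_j=\tilde\Phi_j(D)u_j$ for a suitable $\tilde\Phi_j$.
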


\bigskip

The fact that $a_u(x,\xi)$ from \eqref{au-id} has the structure of a
reduced symbol immediately yields a
simple version of the theorem:

\begin{prop}
  \label{au-prop}
For $u\in L_\infty(\Rn,\R)$ with  $a_u\in
S^{0}_{1,1}(\Rn\times\Rn)$ defined as above, the associated operator 
is bounded, for every $p\in[1,\infty[\,$,
\begin{equation}
  a_u(x,D)\colon F^0_{p,1}(\Rn)\to L_p(\Rn),
\end{equation}
with $L_p$-convergence of \eqref{au-id} for every $v\in F^0_{p,1}(\Rn)$.
\end{prop}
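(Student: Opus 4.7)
The plan would be to exploit the reduced-symbol form of $a_u$ directly, matching the two sums in the $F^0_{p,1}$-norm and in the series \eqref{au-id} term by term. The crucial observation is that for $s=0$ and $q=1$ the defining expression collapses to $\norm{v}{F^0_{p,1}}=\nrm{\sum_{j\ge0}|v_j(\cdot)|}{p}$. Combined with the uniform bound $\nrm{m_j}{\infty}\le\nrm{F'}{\infty}$ already noted after \eqref{au-id}, this would give, for any partial sum $S_N v(x):=\sum_{j=0}^N m_j(x)\Phi_j(D)v(x)$, the pointwise estimate
\[
  |S_N v(x)|\le \nrm{F'}{\infty}\sum_{j=0}^N |v_j(x)|\le \nrm{F'}{\infty}\sum_{j=0}^\infty |v_j(x)|,
\]
and hence $\nrm{S_N v}{p}\le \nrm{F'}{\infty}\norm{v}{F^0_{p,1}}$ uniformly in $N$.

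For the $L_p$-convergence of the series in \eqref{au-id} I would apply the same estimate to tails: for $M<N$,
\[
  |S_N v(x)-S_M v(x)|\le \nrm{F'}{\infty}\sum_{j=M+1}^N |v_j(x)|.
\]
The right-hand side is dominated by the $L_p$-function $\nrm{F'}{\infty}\sum_{j\ge0}|v_j|$, and, since $\sum_{j\ge0}|v_j|$ is finite a.e., its tails tend to $0$ pointwise as $M\to\infty$. Lebesgue's dominated convergence theorem then yields that $(S_N v)_{N\ge0}$ is Cauchy in $L_p$, and one defines $a_u(x,D)v$ as its $L_p$-limit. Passing the uniform estimate above to the limit gives the claimed bound $\nrm{a_u(x,D)v}{p}\le \nrm{F'}{\infty}\norm{v}{F^0_{p,1}}$; and for $v\in\cal S(\Rn)$ this definition agrees with the pointwise one in \eqref{au-id}.

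I do not foresee a real obstacle here: because $a_u$ is presented as a reduced symbol, the sum expressing $a_u(x,D)v$ lines up term by term with the sum defining $\norm{v}{F^0_{p,1}}$, so everything reduces to the triangle inequality together with a standard dominated-convergence argument. The genuine difficulty addressed later in the paper is precisely that an arbitrary $a\in S^d_{1,1}$ does not come equipped with such a decomposition, which is why the spectral control supplied by the support rule (Proposition~\ref{supp-prop}) will be needed to obtain analogous pointwise majorants in the general case.
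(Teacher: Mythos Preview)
Your proof is correct and follows essentially the same route as the paper's: both use the pointwise bound $|m_j|\le\nrm{F'}{\infty}$ and the triangle inequality to dominate the partial sums by $\nrm{F'}{\infty}\sum_{j\ge0}|v_j(\cdot)|\in L_p$, then invoke this majorant (via dominated convergence) to see that the series is Cauchy in $L_p$. The paper's version is merely more compressed.
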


\begin{proof}
For $v\in F^0_{p,1}$, one finds for any finite sum 
\begin{equation}
    \nrm{\sum m_j\Phi_j(D)v}{p}\le
    \nrm{\sum |m_j|\cdot|v_j|}{p}
    \le \nrm{F'}{\infty} \Nrm{\sum |v_j|}{p},
  \label{au-ineq}
\end{equation}
so it follows, with $\sum_{j=0}^\infty |v_j(x)|$ as a majorant, that the
series for $a_u(x,D)v(x)$ is fundamental in $L_p(\Rn)$, hence convergent. 
$a_u(x,D)v$ being defined thus, the above estimate may be read verbatim for
the sum over all $j\ge0$, which yields the claimed boundedness.
\end{proof}

The series defining $a_u(x,D)$ converges in  
$L_p$, as shown, and it extends
$\op{OP}(a_u)$ defined on $\cal S(\Rn)$ by \eqref{axD-eq}. Indeed, since
$|a_u|\le \nrm{F'}{\infty}$, passage to a subsequence (if necessary) and
majorisation gives a.e., for $v\in\cal S(\Rn)$,
\begin{equation}
  a_u(x,D)v(x)=\lim_{N\to\infty} 
  \int \tfrac{e^{\im x\cdot\xi}}{(2\pi)^n}
  (\sum_{j=0}^N m_j\Phi_j)\hat v
  \,d\xi
  = \op{OP}(a_u) v(x).
  \label{auS-eq}
\end{equation}
For $v\in B^{0}_{\infty,1}(\Rn)$, formula \eqref{au-ineq} holds with
$p=\infty$, so by the triangle inequality 
$a_u(x,D)\colon B^0_{\infty,1}\to L_\infty$ is bounded. This gives 
\eqref{Fux-eq} for $u$ in the subspace $B^0_{\infty,1}\subset
L_\infty(\Rn,\R)$. (Restriction to the diagonal will extend
$(u,v)\mapsto a_u(x,D)v$ to all $u$ in $L_\infty(\Rn,\R)$; cf.\ 
\eqref{Fum-eq}.)

\bigskip

The counterexample needed for the theorem is essentially the same
as Ching's construction \cite{Chi72} (that was also analysed in
\cite{Bou88,H88}); with a few convenient modifications this is obtained by
letting 
\begin{equation}
  a(x,\xi)=\sum_{j=1}^\infty 2^{jd}\Phi_j(\xi)e^{-\im x_n2^j}  .
  \label{ching-eq}
\end{equation}
This is in $S^d_{1,1}$, since $2^j\doteq 1+|\xi|$ on the support of
$\Phi_j=\Phi(2^{-j}\cdot)$. 

\begin{lem}
  \label{cex-lem}
For $d\in\R$ there exist symbols $a\in S^{d}_{1,1}(\Rn\times\Rn)$ and
functions 
$\theta_N\in \cal S(\Rn)$ such that, for all $t\in[1,\infty]$, all
$q\in\,]1,\infty]$,
\begin{equation}
  \theta_N\to 0 \text{ in $F^d_{t,q}(\Rn)$ (for $t<\infty$) 
  and in $B^d_{t,q}$}
\end{equation}
while $a(x,D)\theta_N\not\to 0$ in $\cal D'(\Rn)$ for $N\to\infty$.
\end{lem}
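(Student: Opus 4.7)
The plan is to take the symbol $a\in S^d_{1,1}$ already written down in \eqref{ching-eq} and to build $\theta_N$ as a normalised superposition of $N$ high-frequency modulations of one fixed Schwartz bump, tuned so that the $e^{-\im x_n 2^j}$ factor inside $a$ cancels the $j$th modulation, the $N$ outputs then pile up coherently into a fixed non-zero function, while the coefficients simultaneously make the $\ell_q$ mass small precisely when $q>1$.

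Concretely I would fix $\phi\in\cal S(\Rn)$ with $\phi\not\equiv 0$ and $\supp\hat\phi\subset B(0,\tfrac{1}{10})$, and set
\begin{equation*}
  \theta_N(x)=\tfrac{1}{N}\sum_{j=1}^N 2^{-jd}\phi(x)e^{\im x_n 2^j}\in\cal S(\Rn).
\end{equation*}
The first step is to check that, for every $j\ge 1$, the translated ball $B(2^j e_n,\tfrac{1}{10})$ lies in the plateau $\{\Phi_j\equiv 1\}$ and is disjoint from $\supp\Phi_k$ for every $k\ne j$; this follows from the explicit constants $\tfrac{11}{10}$ and $\tfrac{13}{10}$ used in the definition of $\Psi$. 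Granting this, the summands of $\theta_N$ live in disjoint spectral bands, so the Littlewood--Paley pieces decouple to $\Phi_j(D)\theta_N=\tfrac{1}{N}2^{-jd}\phi(x)e^{\im x_n 2^j}$ for $1\le j\le N$ (and $0$ otherwise).

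From $|\Phi_j(D)\theta_N|=\tfrac{1}{N}2^{-jd}|\phi|$ both the Besov and Triebel--Lizorkin (quasi-)norms collapse to the same explicit quantity $c_{t,q}N^{1/q-1}\nrm{\phi}{t}$, with a supremum in place of the $\ell_q$-sum when $q=\infty$; this tends to $0$ for every $t\in[1,\infty]$ as soon as $q>1$. On the other hand, since $\theta_N\in\cal S(\Rn)$, the oscillatory integral \eqref{axD-eq} for $a(x,D)\theta_N$ can be expanded termwise, and the same spectral disjointness collapses the double sum in $j,k$ to the diagonal $k=j$; the prefactors $2^{jd-jd}$ and $e^{-\im x_n 2^j}e^{\im x_n 2^j}$ then cancel, leaving $a(x,D)\theta_N(x)=\phi(x)$ for every $N$, so $a(x,D)\theta_N\not\to 0$ even in $\cal D'(\Rn)$.

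The main obstacle is the purely geometric bookkeeping in the first step: a single radius must be found that simultaneously places $B(2^j e_n,\cdot)$ inside $\{\Phi_j\equiv 1\}$ for every $j\ge 1$ and keeps it disjoint from every other $\supp\Phi_k$. Once this is settled, the norm estimates and the evaluation of $a(x,D)\theta_N$ are direct, and notably no appeal to Proposition~\ref{supp-prop} or Proposition~\ref{E'-prop} is needed, since $\theta_N$ is a Schwartz function throughout.
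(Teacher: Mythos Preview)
Your argument is correct and follows essentially the same route as the paper: the same symbol \eqref{ching-eq}, the same construction of $\theta_N$ as a superposition of modulated copies of a single bump with disjoint dyadic spectra, and the same explicit computation of both the Littlewood--Paley norms and of $a(x,D)\theta_N$. The only cosmetic difference is that the paper uses harmonic weights $1/j$ over $j\in[N,N^2]$ (so that $a(x,D)\theta_N=(\tfrac{1}{N}+\dots+\tfrac{1}{N^2})\theta$ diverges like $\log N$), whereas you use uniform weights $1/N$ over $j\in[1,N]$ (so that $a(x,D)\theta_N\equiv\phi$ is constant in $N$); both choices give $\ell_q$-norm $\sim N^{1/q-1}$, resp.\ the tail of $\sum j^{-q}$, and hence work for exactly the range $q>1$.
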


\begin{proof}
Take $\theta\in\cal S(\Rn)\setminus\{0\}$ with
$\supp\hat\theta\subset \{\,|\xi|\le\tfrac{1}{20}\,\}$ and let
\begin{equation}
  \hat\theta_N(\xi)=\sum_{j=N}^{N^2} \tfrac{2^{-jd}}{j}
  \hat\theta(\xi-2^je_n)=
  \cal F\bigl(\theta\sum_{j=N}^{N^2} 
  \frac{e^{\im 2^jx_n}}{j2^{jd}} \bigr).
\end{equation}
Since $\Phi_j\equiv1$ on
$\supp \cal F(\theta e^{\im x_n2^j})$, any $q>1$ gives for $N\to\infty$,
\begin{equation}
  \norm{\theta_N}{F^d_{t,q}}=
  \norm{(\sum_{j=N}^{N^2} j^{-q}|\theta|^q)^\fracci1q}{L_t}
  \le \nrm{\theta}{t}(\sum_N^\infty j^{-q})^\fracci1q
  \searrow 0.
\end{equation}
The Besov case is analogous. Because
$\theta_N$ is defined by a finite sum, a direct computation gives for
the above $a(x,\xi)$
\begin{equation}
  a(x,D)\theta_N=(\tfrac{1}{N}+\dots+\tfrac{1}{N^2})\theta(x).
  \label{atN-eq}
\end{equation}
Any $\varphi\in C^\infty_0(\Rn)$ with
$\dual{\theta}{\varphi}=1$ yields
$\dual{a(x,D)\theta_N}{\varphi}\ge \log N$, so clearly
$a(x,D)\theta_N$ does not
tend to zero in the distribution sense.
\end{proof}

Clearly $\hat a(\xi,\eta)=
\sum_{j=1}^\infty (2\pi)^n 2^{jd} \delta_{-2^je_n}(\xi)\Phi_j(\eta)$,
so it is very visible that the condition in \eqref{H-cnd} on the
twisted diagonal is unfulfilled. However, it is also noteworthy 
that $a(x,D)$ moves all frequency contributions
in $\theta_N$ to a neighbourhood of the origin, cf
\eqref{atN-eq}; this is achieved by means of the exponentials 
$e^{\im x\cdot (2^je_n)}$ in $a(x,\xi)$.

\section{On the definition of pseudo-differential operators}
  \label{defn-sect}

Recall first that $a(x,D)u$ is defined for arbitrary symbols $a\in\cal
S'(\R^{2n})$ if one is content with having $u\in\cal S(\Rn)$.
This is via the distribution kernel $K(x,y)=
\cal F^{-1}_{\xi\to z}a(x,\xi)|_{z=x-y}$, 
\begin{equation}
  a(x,D)u(x)= \dual{K(x,\cdot)}{u},\quad\text{for}\quad u\in\cal S(\Rn);
  \label{aK-eq}
\end{equation}
this is just two designations of the functional $\varphi
\mapsto \dual{K_{j,k}}{\varphi\otimes u}$, $\varphi\in \cal S$.
And if, say $a\in S^{\infty}_{1,0}(\Rn\times\Rn)$ suffices,  $a(x,D)u$ is
defined for all $u$ in $\bigcup H^s$ or $\cal S'(\Rn)$; cf~\eqref{aS-eq}. 
The theorem deals with cases between these two extremes, so
it is desirable to explicate how $a(x,D)u$ should be read for 
$a\in S^d_{1,1}(\Rn\times\Rn)$ and $u\in F^d_{p,1}(\Rn)$.

\subsection{Paradifferential techniques}
  \label{para-ssect}
Along with $u_j=\Phi_j(D)u$ it is useful to introduce the auxiliary functions
$\tilde\Phi_j=\Phi_{j-1}+\Phi_j+\Phi_{j+1}$ and set
\begin{equation}
  a_{j,k}(x,\eta)=\cal F^{-1}_{\xi\to x}(\Phi_j\hat a(\cdot,\eta))
  \tilde\Phi_k(\eta) .
\end{equation}
One can then make the ansatz
\begin{equation}
  a(x,D)u(x)=a^{(1)}(x,D)u(x)+a^{(2)}(x,D)u(x)+a^{(3)}(x,D)u(x),
  \label{ansatz-eq}
\end{equation}
when the pair $(a,u)$ is such that the following
series converge in $\cal D'(\Rn)$:
\begin{align}
  a^{(1)}(x,D)u&=\sum_{k=2}^\infty \sum_{j=0}^{k-2} a_{j,k}(x,D)u_k,
  \label{a1xD-eq}  \\
  a^{(2)}(x,D)u&=\sum_{k=0}^\infty \sum_{\substack{j,l=0,1\\ j+l\le1}}
                  a_{k-j,k-l}(x,D)u_{k-l},
  \label{a2xD-eq}  \\
  a^{(3)}(x,D)u&=\sum_{j=2}^\infty \sum_{k=0}^{j-2} a_{j,k}(x,D)u_k.
  \label{a3xD-eq}
\end{align}
The reason for the insertion of $\tilde\Phi_k$ above is that its
compact support yields $a\in S^\infty_{1,1} \implies a_{j,k}\in S^{-\infty}$,
so all terms
$a_{j,k}(x,D)u_k$ make sense for $u\in\cal S'$. 

Clearly $\tilde\Phi_k$ is redundant in $a_{j,k}(x,D)u_k$, for
$\tilde\Phi_k\Phi_k\equiv\Phi_k$ applies in \eqref{axD-eq} for
$u\in \cal S$, so that $\cal S'$-continuity gives
\begin{equation}
 a_{j,k}(x,D)u_k= \op{OP}(\Phi_j(D_x)a(x,\xi)\Phi_k(\xi))u,
  \qquad u\in \cal S'.
  \label{tildephi-eq}
\end{equation}
It is also convenient eg for the later application of
Proposition~\ref{JM-prop} below to have compact support in $\xi$ of $a_{j,k}$.

Note also that when $K_{j,k}$ denotes the kernel of $a_{j,k}(x,D)$, there is a
specific meaning of \eqref{aK-eq}
applied to $a_{j,k}(x,D)$, namely the integral
\begin{equation}
  a_{j,k}(x,D)u_k=\int_{\Rn}K_{j,k}(x,y)u_k(y)\,dy, 
   \quad\text{for}\quad u\in\cal S'(\Rn).
   \label{ajk-eq}
\end{equation}
Indeed, since $\supp a_{j,k}(x,\cdot)\Subset\Rn$,
the Paley--Wiener--Schwartz Theorem and the inequality 
$(1+|y|)^N\le c_N(1+|x|)^N(1+|y-x|)^N$ yield that $K_{j,k}(x,\cdot)$ is 
$\cal O((1+|y|)^N)$ for any $N<0$, while $u_k(y)$ is
so for an $N\ge0$, whence the integral 
exists; \eqref{ajk-eq} follows 
if $u_k\in\cal C^\infty_0(\Rn)$ from Fubini's theorem, so
one can insert $\Psi_m u_k$ in \eqref{ajk-eq} and let $m\to\infty$.

The $a^{(j)}$-series are thus well defined, and they
converge if $a\in S^{\infty}_{1,1}$ and $u\in\cal S$; this
follows from the proof of the theorem in Section~\ref{pf-ssect} below.
Granted this convergence, $a(x,D)u$ defined in \eqref{ansatz-eq} 
is easily seen to equal $\op{OP}(a)u$: indeed, using \eqref{tildephi-eq}
and majorised convergence for $\varphi\in C^\infty_0$,
\begin{equation}
  \begin{split}
  \dual{\sum_{j=1}^3 a^{(j)}(x,D)u}{\varphi}
  &=\lim_{N\to\infty} \dual{\op{OP}(\sum_{j=0}^N \Phi_j(D_x)a)
           (\sum_{k=0}^N u_k)}{\varphi}
\\
  &=\lim_{N\to\infty} \int 
           \tfrac{e^{\im x\cdot\xi}}{(2\pi)^n}
                            \varphi(x)(\Psi_N\hat u)(\xi)
                     \Psi_N(D_x)a(x,\xi) \,d(x,\xi)
\\
  &=\dual{\op{OP}(a)u}{\varphi}.
  \end{split}
  \label{corrsp-eq}
\end{equation}
Therefore any continuity result proved for $a(x,D)$,
with $a\in S^d_{1,1}$, constitutes an extension of 
$\op{OP}(a)\colon \cal S\to\cal S$, in a unique way when $\cal
S$ is dense. This will be the case for the extension to $F^d_{p,1}$
with $1\le p<\infty$ obtained in Section~\ref{pf-ssect} below.
But eg for $u\in B^d_{\infty,1}$ the paradifferential
ansatz above not only describes but also \emph{defines} 
the distribution $a(x,D)u$.

It is important, and essentially known, that the procedure above gives back
the usual pseudo-differential operators, but in lack of a reference a proof
is supplied:
\begin{lem}
  \label{S10-lem}
If $a\in S^\infty_{1,0}(\Rn\times\Rn)$ and $u\in \cal S'(\Rn)$ the series in
\eqref{a1xD-eq}--\eqref{a3xD-eq} converge in $\cal S'(\Rn)$, and
\eqref{ansatz-eq} gives $a(x,D)u$ as defined by \eqref{axD-eq} ff.
\end{lem}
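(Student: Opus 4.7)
My plan is to reduce each summand $a_{j,k}(x,D)u_k$ to a standard pseudo-differential operator applied to $u$ and to exploit that for $a\in S^d_{1,0}\subset S^\infty_{1,0}$ the high-$x$-frequency parts $\Phi_j(D_x)a$ decay rapidly in $j$. By \eqref{tildephi-eq}, $a_{j,k}(x,D)u_k=\op{OP}(\Phi_j(D_x)a(x,\xi)\cdot\Phi_k(\xi))u$, whence the $k$-block of \eqref{a1xD-eq} collapses to $\op{OP}(\Psi_{k-2}(D_x)a\cdot\Phi_k)u$, the $j$-block of \eqref{a3xD-eq} to $\op{OP}(\Phi_j(D_x)a\cdot\Psi_{j-2})u$, the $k$-block of \eqref{a2xD-eq} to a single pseudo-differential operator applied to $u$, and the total partial sum $\sum_{j,k\le N}a_{j,k}(x,D)u_k$ to $\op{OP}(\Psi_N(D_x)a)\Psi_N(D)u$ via composition with the Fourier multiplier $\Psi_N(D)$.

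The key estimate I would then establish is that for $j\ge 1$ every $S^d_{1,0}$-seminorm of $\Phi_j(D_x)a$ is $\cal O(2^{-jM})$ for each $M\in\N$. Writing $\Phi_j(D_x)a(x,\xi)=\int(\cal F^{-1}\Phi_j)(y)a(x-y,\xi)\,dy$, Taylor expanding $a(x-y,\xi)$ in $y$ to order $M$ and invoking the vanishing moments $\int y^\alpha(\cal F^{-1}\Phi_j)(y)\,dy=\im^{|\alpha|}D^\alpha\Phi_j(0)=0$ (valid for all $\alpha$, since $\Phi_j$ vanishes near the origin when $j\ge 1$), the Taylor remainder is bounded by $(1+|\xi|)^d\int|y|^M|(\cal F^{-1}\Phi_j)(y)|\,dy=\cal O(2^{-jM}(1+|\xi|)^d)$ via the standard scaling of $\cal F^{-1}\Phi_j$. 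The same argument applied to $D^\alpha_\xi D^\beta_x a\in S^{d-|\alpha|}_{1,0}$ handles every seminorm.

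Convergence of \eqref{a3xD-eq} and \eqref{a2xD-eq} in $\cal S'(\Rn)$ is then immediate, because their respective $j$- and $k$-block symbols inherit seminorms $\cal O(2^{-jM})$ and $\cal O(2^{-kM})$; the symbol series converge absolutely in the Fr\'echet topology of $S^d_{1,0}$, and standard continuity of $b\mapsto\op{OP}(b)u$ from $S^d_{1,0}$ to $\cal S'(\Rn)$ (a consequence of adjoint invariance of $S^d_{1,0}$ together with duality) transfers this to the operator series. For \eqref{a1xD-eq}, whose $k$-block symbol $\Psi_{k-2}(D_x)a\cdot\Phi_k$ only admits uniform bounds in $k$, I would reorder to $\sum_l\op{OP}(\Phi_l(D_x)a)(u-\Psi_{l+1}(D)u)$ and pair with $\varphi\in\cal S(\Rn)$: the pairing equals $\dual{u-\Psi_{l+1}(D)u}{\op{OP}((\Phi_l(D_x)a)^*)\varphi}$, the Schwartz function $\op{OP}((\Phi_l(D_x)a)^*)\varphi$ has seminorms $\cal O(2^{-lM})$ while $\{u-\Psi_{l+1}(D)u\}$ is uniformly bounded in $\cal S'(\Rn)$, yielding absolute summability of the pairings and thus weak-$\ast$ convergence.

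Finally, summing the key estimate over $j>N$ gives $\Psi_N(D_x)a\to a$ in the Fr\'echet topology of $S^d_{1,0}$; combined with $\Psi_N(D)u\to u$ in $\cal S'(\Rn)$ this forces $\op{OP}(\Psi_N(D_x)a)\Psi_N(D)u\to\op{OP}(a)u$ in $\cal S'(\Rn)$, matching the definition of $a(x,D)u$ from \eqref{axD-eq}~ff. The main obstacle will be the systematic use of continuity of the symbol-to-operator map in the Fr\'echet topologies in question; this is assured by the classical symbolic calculus for $S^d_{1,0}$, which for the present purpose is best invoked through the Schwartz-valued adjoint map $b\mapsto\op{OP}(b^*)\varphi$.
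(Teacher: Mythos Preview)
Your proof is correct and rests on the same backbone as the paper's---the continuity of $b\mapsto\op{OP}(b)$ on $\cal S'$, realised through the adjoint symbol $b^*=\exp(\im D_x\cdot D_\xi)\bar b$ in $S^d_{1,0}$---but deploys it with different tactics. The paper shows that the \emph{symbol series} for $a^{(1)}$ converges in $S^{d+1}_{1,0}$: the $k$-th term $\Psi_{k-2}(D_x)a\cdot\Phi_k$ is only uniformly bounded in $S^d_{1,0}$, but the $\xi$-support of $\Phi_k$ contributes a factor $(1+|\xi|)^{-1}=\cal O(2^{-k})$ once one passes to order $d+1$, and this one-order loss is harmless for the adjoint machinery. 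The paper then says ``similarly'' for $a^{(2)}$ and $a^{(3)}$. You instead make explicit the rapid decay of $\Phi_j(D_x)a$ in $S^d_{1,0}$ via vanishing moments; this yields absolute convergence of the symbol series for $a^{(2)}$ and $a^{(3)}$ in $S^d_{1,0}$ itself (which is in fact what underlies the paper's ``similarly''), while for $a^{(1)}$ you reorder so as to sum over the $x$-frequency index and exploit the same decay. Your identification of the total through $\op{OP}(\Psi_N(D_x)a)\Psi_N(D)u$ is also a clean shortcut compared with the paper's piecewise matching of each $a^{(m)}$.

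One small point deserves another sentence: for $a^{(1)}$ you write ``reorder to $\sum_l\dots$'', but the lemma asserts convergence of the \emph{original} $k$-series. Either observe that the double family $\dual{a_{j,k}(x,D)u_k}{\varphi}$ over $\{j\le k-2\}$ is absolutely summable (for $j\ge1$ the test function $\Phi_k(D)\op{OP}((\Phi_j(D_x)a)^*)\varphi$ has Schwartz seminorms $\cal O(2^{-jM-kL})$, since its Fourier support lies where $|\xi|\sim 2^k$), so any rearrangement is legitimate; or compare partial sums directly: the $k$-partial sum minus your $l$-partial sum equals $\op{OP}(\Psi_{N-2}(D_x)a)(\Psi_N(D)u-u)$, which tends to $0$ by the same joint continuity you invoke in your final step.
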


\begin{proof}
With $\scal{\cdot}{\cdot}$ denoting sesqui-linear duality, 
$a(x,D)u$ is the functional $\cal S\ni\varphi\mapsto\scal{u}{b(x,D)\varphi}$ 
for all $u\in \cal S'$ when $a\in S^d_{1,0}$ is given and
$b(x,\xi)=\exp(\im D_x\cdot D_\xi)\overline{a}(x,\xi)$. 
It may be seen as in \cite[Thm.~18.1.7 ff]{H} that 
$a\mapsto b$ is continuous in $S^d_{1,0}$, and this
applies to $a^{(1)}(x,D)$, that by \eqref{tildephi-eq} has symbol
\begin{equation}
  a^{(1)}(x,\xi)=\sum_{k=2}^\infty 
   \Psi_{k-2}(D_x)a(x,\xi)\Phi_k(\xi).
\end{equation}
Indeed, this series converges to $a^{(1)}(x,\xi)$ in the topology of
$S^{d+1}_{1,0}$, so
\begin{equation}
  \begin{split}
  \scal{\sum_{j+2\le k\le N} a_{j,k}(x,D)u}{\varphi}&=
  \sum_{k=2}^N \scal{u}{\bigl(e^{\im D_x\cdot D_\xi}
                              (\Psi_{k-2}(D_x)\overline{a}\Phi_k)\bigr)
                              (x,D)\varphi}
\\
  &\xrightarrow[N\to\infty]{~}
   \scal{u}{\bigl(e^{\im D_x\cdot D_\xi}
                    (\overline{a}^{(1)}(x,\xi))\bigr)(x,D)\varphi}.
  \end{split}
\end{equation}
Here the continuous dependence of the symbol in \eqref{axD-eq}
was also used. Similarly the series for $a^{(2)}$ and $a^{(3)}$ converge,
so the right hand side of \eqref{ansatz-eq} has an action on $\varphi$
equal to 
$\scal{u}{\exp(\im D_x\cdot D_\xi)a(x,D)\varphi}$, 
ie $\scal{a(x,D)u}{\varphi}$.
\end{proof}

\begin{rem}
When \eqref{ansatz-eq} ff is applied to $a_u(x,\xi)$ from
Proposition~\ref{au-prop}, the theorem gives boundedness $a_u(x,D)\colon
F^0_{p,1}\to L_p$, but this equals the operator in Proposition~\ref{au-prop}
in view of \eqref{auS-eq}, \eqref{corrsp-eq} and the density of $\cal S$ in
$F^0_{p,1}$. (The case $B^0_{\infty,1}$ seems to require another treatment.)
\end{rem}

\subsection{Proof of Proposition~\ref{E'-prop}} 
Following the approach above, one can show that for 
$\supp\cal F u\Subset\Rn$,  $a\in S^\infty_{1,1}$,
the series $a^{(1)}(x,D)u$,
$a^{(2)}(x,D)u$ and $a^{(3)}(x,D)u$ all converge 
(the first two are finite sums, and for $a^{(3)}$ one may sum over $j<N$ in
\eqref{ajk-eq} and let $N\to\infty$). 
But there is an equivalent more transparent method,
giving directly that the range is in $O_M(\Rn)$.

If $\hat u\in\cal E'$, $a\in S^\infty_{1,1}$ and $\chi\in C^\infty_0$
equals $1$ in a compact neighbourhood of $\supp\hat u$, then
$b(x,\xi)=a(x,\xi)\chi(\xi)$ is 
in $S^{-\infty}$, hence $f:=b(x,D)u\in O_M$ 
(that $\op{OP}(S^{-\infty})$ maps $\cal S'$ to $O_M$ 
is proved in eg \cite{SRay91}). 
If $\tilde\chi$ is another such cut-off function and $\tilde
b$ the corresponding symbol in $S^{-\infty}$, then $(b-\tilde b)(x,D)v=0$,
for a convolution of $\hat u$ with a sequence of $C^\infty_0$-functions
may produce a sequence $\varphi_k\in \cal S$ that tends to $u$ in $\cal S'$ 
while $(b-\tilde b)(x,D)\varphi_k=0$ eventually.

Moreover, $f=\op{OP}(a)u$ if $u\in\cal F^{-1}C^\infty_0=\cal S\cap\cal
F^{-1}\cal E'$; and 
$f=a(x,D)u$ if $a\in S^\infty_{1,0}$.
Hence $a(x,D)u=f$ is unambiguously defined, and Proposition~\ref{E'-prop} is
proved.

\medskip

Note that, with the set-up of the proof above, it follows from
Lemma~\ref{S10-lem} that the $b^{(m)}$-series converges for $m=1$, $2$, $3$.
But \eqref{tildephi-eq} implies the identity 
\begin{equation}
  b_{j,k}(x,D)u_k=a_{j,k}(x,D)u_k
\end{equation}
for all $j$ and $k$ when $\chi=1$ on a large ball, whence
\begin{equation}
  f=b(x,D)u=a^{(1)}(x,D)u+a^{(2)}(x,D)u+a^{(3)}(x,D)u.
\end{equation}
Thus the given definition is equivalent with the one (mentioned in the
beginning of this section) that consists in
proving directly that \eqref{a1xD-eq}--\eqref{a3xD-eq} all converge for
$\cal Fu\in \cal E'$, hence with the one adopted in
Section~\ref{pf-ssect} below. Consequently any $A\in  \op{OP}(S^d_{1,1})$ is
well defined on the $\cal S'$-subspace
\begin{equation}
  (\sideset{}{'}{\sum_{1\le p<\infty}} F^d_{p,1}(\Rn))+ B^d_{\infty,1}(\Rn)+
  \cal F^{-1}\cal E'(\Rn).
  \label{sumdef-eq}
\end{equation}
($\smash[t]{\sum\nolimits '}$ denotes sums with
only finitely many non-trivial terms.)
Indeed, if $u\in \cal S'$ can be split according to \eqref{sumdef-eq}, the
calculus of limits yields that $a^{(1)}(x,D)u$ etc all converge, with limits
that depend on $u$, but hence not on the splitting. Therefore $a(x,D)u$ is
well defined.

\section{The general borderline case}
  \label{brdl-sect}

\subsection{A pointwise estimate}
To obtain the convergence of the $a^{(j)}(x,D)u$ it is convenient to use
the Hardy-Littlewood maximal function 
\begin{equation}
  M_tf(x)=\sup_{r>0}(\tfrac{1}{|B(x,r)|}\int_{B(x,r)}|f(y)|^t\,dy)^\frac1t,
  \qquad 0<t<\infty.
  \label{Mtf-eq}
\end{equation}
The convolution estimate $|f*g(x)|\le c\nrm{g}{\infty}\cdot
M_1f(x)$, that clearly holds if $f\in L_1^{\op{loc}}$ and 
$g\in L_\infty^{\op{comp}}$, has the following extension to
a `pointwise' estimate for pseudo-differential operators, 
that is central for the present article.

It is remarkable that, in order to get a both weak and flexible requirement
on the symbol, a homogeneous Besov norm of $b(x,\xi)$ is introduced in the
$\xi$-variable, with $x$ considered  as a parameter. Recall here the norm of
the homogeneous Besov space $\dot B^{s}_{p,q}(\Rn)$,
\begin{equation}
  \norm{f}{\dot B^{s}_{p,q}}=
  (\sum_{j=-\infty}^\infty
   2^{sjq}\nrm{\varphi_j(D)f}{p}^q)^{\fracci 1q},
  \label{Bhom-eq}
\end{equation}
where $1=\sum_{j=-\infty}^\infty \varphi_j$ is a partition of unity on
$\Rn\setminus\{0\}$ obtained from $\varphi_j(\xi)=\Phi_1(2^{-j+1}\xi)$.
The $\dot B^s_{p,q}$-norm has the dyadic scaling property:
\begin{equation}
  \norm{f(2^k\cdot)}{\dot B^{s}_{p,q}}=
    2^{k(s-\fracci np)}\norm{f}{\dot B^{s}_{p,q}}
\quad\text{for all}\quad k\in \N.
  \label{dyad-eq}
\end{equation}

\begin{prop}[Marschall's inequality]
  \label{JM-prop}
Let a symbol $b\in\cal S'\cap L_1^{\loc}$ on $\R^{2n}$ and $v\in \cal
S'(\Rn)$ be such that a ball $B(0,2^k)\subset\Rn$, $k\in\Z$, fulfils
\begin{equation}
  \supp\cal Fv \subset B(0,2^k),\qquad
  \supp b\subset \Rn\times B(0,2^k).
  \label{supp-rel}
\end{equation}
The distribution kernel $K$ of $b(x,D)$ is then a locally integrable function, 
ie $K\in L_1^{\loc}(\R^{2n})\cap\cal S'(\R^{2n})$.
Moreover, if for some $t\in\,]0,1]$ 
\begin{equation}
  M_tv(x)\cdot\norm{b(x,2^k\cdot)}{\dot B^{n/t}_{1,t}(\Rn)}
  \in L_1^{\loc}(\Rn),
  \label{Mvb-eq}
\end{equation}
then $K(x,\cdot)v(\cdot)$ is in $L_1(\Rn)$ for a.e.\ $x\in\Rn$, and 
\begin{equation}
  b(x,D)v(x):=\int_{\Rn} K(x,y)v(y)\,dy
  \label{bxDv-eq}
\end{equation}
then defines the action of $b(x,D)$ on $v$ as a function in $L_1^{\loc}(\Rn)$
fulfilling
\begin{equation}
  |b(x,D)v(x)|\le c\norm{b(x,2^k\cdot)}{\dot B^{n/t}_{1,t}(\Rn)}
  M_tv(x)
  \label{JM-ineq}
\end{equation} 
for some constant $c$ independent of $k$.
\end{prop}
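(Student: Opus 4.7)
The plan is to proceed in three stages: local integrability of the distribution kernel, a Peetre-type reduction of \eqref{JM-ineq} to a weighted $L_1$-estimate, and a Littlewood--Paley argument for the latter.

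For local integrability of $K$, observe that the compact $\xi$-support of $b$ in $B(0, 2^k)$ combined with $b \in L_1^{\loc}(\R^{2n})$ yields by Fubini that $b(x, \cdot) \in L^1(\Rn)$ for almost every $x$. Hence the kernel $K(x, y) = (2\pi)^{-n}\int_{\Rn} e^{\im(x-y)\cdot\xi} b(x, \xi)\,d\xi$ is continuous in $y$ for such $x$, bounded pointwise by $(2\pi)^{-n}\nrm{b(x, \cdot)}{1}$, and $K \in L_1^{\loc}(\R^{2n})$ by a further application of Fubini.

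To prove the pointwise bound \eqref{JM-ineq}, I would rescale by setting $B(x, \eta) := b(x, 2^k\eta)$, so that $K(x, y) = 2^{kn}\check B(x, 2^k(x-y))$ with $\check B(x, \cdot) = \cal F^{-1}_\eta B(x, \cdot)$. Since $\supp \hat v \subset B(0, 2^k)$, Peetre's maximal function inequality provides, for every $t \in (0, 1]$, the pointwise majorisation $|v(x - 2^{-k}z)| \le c(1+|z|)^{n/t}\, M_t v(x)$. Inserting this in \eqref{bxDv-eq} and changing variable $z = 2^k(x-y)$ reduces the problem to the weighted $L_1$-estimate
\begin{equation*}
  \int_{\Rn} |\check B(x, z)|\, (1+|z|)^{n/t}\,dz \le c\, \Norm{B(x, \cdot)}{\dot B^{n/t}_{1, t}(\Rn)}.
\end{equation*}

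The technical heart of the proof, which I expect to be the main obstacle, is this weighted estimate. I would establish it through the Littlewood--Paley decomposition $B(x, \cdot) = \sum_{j} B^j(x, \cdot)$ with $B^j := \varphi_j(D_\eta) B$, exploiting the structural fact that $\check B^j(x, z) = (2\pi)^{-n} \hat B^j(x, -z)$ is compactly supported in the annulus $\{|z| \sim 2^j\}$, on which $(1+|z|)^{n/t}$ simplifies to a factor of order $2^{jn/t}$. Bernstein-type control of $\nrm{\check B^j(x, \cdot)}{L_1(dz)}$ by $\nrm{B^j(x, \cdot)}{L_1(d\eta)}$, together with the subadditivity $\sum a_j \le (\sum a_j^t)^{1/t}$ valid precisely for $t \le 1$, then assembles the series into the homogeneous Besov quasinorm on the right-hand side; here the critical index $n/t$ in $\dot B^{n/t}_{1, t}$ balances exactly the Peetre exponent $n/t$, which is why no algebraic loss is incurred. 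The almost-everywhere integrability of $K(x, \cdot)v(\cdot)$ in $L^1(\Rn)$ finally follows from \eqref{JM-ineq} combined with the hypothesis \eqref{Mvb-eq} by Fubini, so that \eqref{bxDv-eq} does define $b(x, D)v$ as an $L_1^{\loc}$-function.
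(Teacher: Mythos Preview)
Your first stage (local integrability of $K$) is fine and matches the paper.  The genuine gap is in the ``technical heart'': the weighted $L_1$--estimate
\[
  \int_{\Rn} |\check B(x,z)|\,(1+|z|)^{n/t}\,dz \;\le\; c\,\norm{B(x,\cdot)}{\dot B^{n/t}_{1,t}}
\]
is \emph{false}, even for $B(x,\cdot)$ supported in $B(0,1)$.  Take any nonzero $\psi\in C^\infty_0(B(0,1))$ and set $B_\lambda(\eta)=\psi(\lambda\eta)$ for $\lambda\ge1$; then $\supp B_\lambda\subset B(0,1)$, the dyadic scaling \eqref{dyad-eq} gives $\norm{B_\lambda}{\dot B^{n/t}_{1,t}}=\lambda^{n/t-n}\norm{\psi}{\dot B^{n/t}_{1,t}}$, whereas $\check B_\lambda(z)=\lambda^{-n}\check\psi(\lambda^{-1}z)$ yields
\[
  \int|\check B_\lambda(z)|(1+|z|)^{n/t}\,dz=\int|\check\psi(w)|(1+\lambda|w|)^{n/t}\,dw\;\sim\;\lambda^{n/t}
  \quad(\lambda\to\infty).
\]
The ratio blows up like $\lambda^{n}$.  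Concretely, your ``Bernstein-type control'' of $\nrm{\check B^j}{1}$ by $\nrm{B^j}{1}$ cannot hold with a $j$-independent constant: since $\check B^j(z)=(2\pi)^{-n}\varphi_j(-z)\hat B(x,-z)$ is supported in a set of measure $\sim 2^{jn}$, the best available bound is $\nrm{\check B^j}{1}\le c\,2^{jn}\nrm{B^j}{1}$, and combining this with the weight $(1+2^j)^{n/t}$ and the $\ell_t$-subadditivity gives $2^{jn(1+t)}$ inside the sum, not the $2^{jn}$ that the $\dot B^{n/t}_{1,t}$-norm carries.

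The loss enters precisely where you invoke Peetre's inequality, which decouples $v$ from the kernel too early.  The paper avoids this by keeping the product $K(x,\cdot)v(\cdot)$ together: since it has spectrum in $B(0,2R)$ for $R=2^k$, the Nikolski\u\i--Plancherel--Polya inequality yields
\[
  \int|K(x,y)v(y)|\,dy \le cR^{n/t-n}\Bigl(\int|K(x,y)v(y)|^t\,dy\Bigr)^{1/t},
\]
and only \emph{then} is the partition $1=\sum_j\varphi_j(y-x)$ inserted in the $L_t$-integral.  On each annulus $|y-x|\sim 2^j$ the kernel is estimated in $L_\infty$ by $\nrm{\varphi_j(D_\xi)b(x,\cdot)}{1}$, while $\int_{B(x,c2^j)}|v|^t\le c\,2^{jn}M_tv(x)^t$.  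Here the factor $2^{jn}$ appears \emph{once}, from the maximal function, and the $\ell_t$-sum is exactly $\norm{b(x,\cdot)}{\dot B^{n/t}_{1,t}}$; the prefactor $R^{n/t-n}$ is then absorbed by rescaling to $\norm{b(x,2^k\cdot)}{\dot B^{n/t}_{1,t}}$.  In your scheme the $2^{jn}$ appears twice --- once from Peetre (hidden in $(1+|z|)^{n/t}$ after the $L_\infty\to L_1$ step on the annulus) and once from the volume in ``Bernstein'' --- and this double-counting is what breaks the estimate.
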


\begin{proof}
If $\psi\in C^\infty_0(\R^{2n})$ it follows from the assumption $b\in
L_1^{\loc}$ and the definition of partial Fourier transformation by duality
that
\begin{equation}
  \begin{split}
    \dual{\cal F^{-1}_{\xi\to z}b}{\psi}
  &=
  (2\pi)^{-n}\!\iiint e^{\im z\cdot\xi}b(x,\xi)\psi(x,z)\,d\xi dxdz
\\
  &=\dual{\int \tfrac{e^{\im z\cdot\xi}}{(2\pi)^n}b(x,\xi)\,d\xi}{\psi}.
  \end{split}
\end{equation}
Here the last identity uses that 
$\int \tfrac{e^{\im z\cdot\xi}}{(2\pi)^n}b(x,\xi)\,d\xi$ lies in
$ L_1^{\loc}(\R^{2n})$, hence in $\cal D'$.
Indeed, by Fubini's theorem, 
$\psi(x,z)\int \tfrac{e^{\im z\cdot\xi}}{(2\pi)^n}b(x,\xi)\,d\xi$ is
integrable for any $\psi\in  C^\infty_0$, and in particular if $\psi=1$ on a
large ball. Hence $K(x,y)=\cal F^{-1}_{\xi\to z}b(x,\xi)|_{z=x-y}$
is in $L_1^{\loc}$.

It is now clear that $K(x,y)v(y)$ is measurable, so the following
estimates make sense and, post festum, prove the integrability in view of
\eqref{Mvb-eq}. Indeed, note first that since $\cal F$ 
sends every convolution in $\cal S'*\cal E'$ into a product, cf
\cite[Thm.~7.1.15]{H}, it holds, since $\cal
F^{-1}=(2\pi)^{-n}\bar{\cal F}$, for a.e.\ fixed $x$ that
$(2\pi)^{-n}\cal F^{-1}\bigl((e^{-\im x\cdot \eta}b(x,-\eta))*\hat v\bigr)$
equals $K(x,\cdot)v(\cdot)$, which implies that the latter function 
has spectrum in $B(0,2R)$ for $R=2^k$.
So if $0<t\le 1$, the Nikolski\u\i--Plancherel--Polya inequality, cf
\cite[Thm.~1.4.1]{T2}, gives 
\begin{equation}
  \int |K(x,y)v(y)|\,dy\le c(2R)^{\frac{n}{t}-n}
  (\int |K(x,y)v(y)|^t\,dy)^\frac{1}{t}.
  \label{Knpp-ineq}
\end{equation}
Inserting $1=\sum_{j=-\infty}^\infty \varphi_j(y-x)$ a.e., for
$\varphi_j=\Phi(2^{-j}\cdot)$, 
and using that 
\begin{equation}
  |K(x,y)\varphi_j(y-x)|\le
  \nrm{\cal F^{-1}_{z\to\xi}(\varphi_j(z)\cal F_{\xi\to z}b(x,\xi))}{1}=
  \nrm{\varphi_j(D_\xi)b(x,\cdot)}{1}, 
\end{equation}
the inequality \eqref{Knpp-ineq} and the definition of $M_t v(x)$ give, in
view of \eqref{dyad-eq},
\begin{equation}
\begin{split}
    \int|K(x,y)v(y)|\,dy&\le
    cR^{\frac{n}{t}-n}(\smash[b]{\sum_{j=-\infty}^\infty} 
      \int_{B(x,2^j)}|v(y)|^t\,dy\cdot \tfrac{2^{jn}}{|B(x,2^j)|}
\\
  & \hphantom{\le cR^{\frac{n}{t}-n}(\sum_{j=-\infty}^\infty\int_{B(x,2^j)}|v}
   \times\nrm{K(x,\cdot)\varphi_j(\cdot-x)}{\infty}^t)^{\smash{\frac{1}{t}}}
\\
  &\le cR^{\frac{n}{t}-n}\norm{b(x,\cdot)}{\dot B^{n/t}_{1,t}} M_tv(x)
\\
  &= c \norm{b(x,R\cdot)}{\dot B^{n/t}_{1,t}} M_tv(x).
  \end{split}
  \label{Bdot-ineq}
\end{equation}

If the integrability is exploited to \emph{define} $b(x,D)v$ 
by \eqref{bxDv-eq}, then \eqref{JM-ineq} holds by \eqref{Bdot-ineq} and it
follows from \eqref{Mvb-eq} and the observed measurability
that one gets a distribution in this way.
Note that by \eqref{bxDv-eq} this definition is consistent with the case 
in which $b\in S^\infty_{1,0}$, hence also if $b\in S^\infty_{1,1}$.
\end{proof}

\begin{rem}   \label{Marsch-rem}
Proposition~\ref{JM-prop} requires detailed comments because of overlap with
\cite[Prop.~5(a)]{Mar96}. On the one hand, the estimate \eqref{JM-ineq} is
to my knowledge an original contribution of Marschall; it appeared already
in his thesis \cite[p.~37]{Mar85}, albeit without details.

On the other hand, \cite[Prop.~5(a)]{Mar96} is difficult to follow.
For one thing this is
because of a vague formulation requiring, in addition to \eqref{supp-rel},
$b$ to be ``a symbol $\Rn\times\Rn\to\C$'' 
(replaced by $b\in L_1^{\loc}\cap\cal S'$ in Prop.~\ref{JM-prop}). 
Secondly his proposition is ``singled
out'' after the proof of Proposition~4 there, where the set-up is different
and it furthermore seems to be taken for granted that $b(x,D)v$ \emph{has}
been defined as in \eqref{bxDv-eq} (neither \eqref{Mvb-eq} nor \eqref{bxDv-eq}
was mentioned in \cite{Mar96}); the question of finding conditions assuring
that $b(x,D)v\in \cal D'(\Rn)$ was also not treated, and all in all the
situation is rather more delicate than what \cite{Mar96} gives reason to
believe.   
On these grounds, the details in Proposition~\ref{JM-prop} and its
proof should be well motivated.
\end{rem}

\subsection{Proof of the theorem}
  \label{pf-ssect}
Recall first the Fefferman--Stein inequality 
that the maximal function in \eqref{Mtf-eq}
for $1\le p<\infty$, $1\le q\le\infty$, and any $t\in\,]0,1[$, satisfies
\begin{equation}
  \nrm{(\sum_{k=0}^\infty |M_tf_k|^q)^\fracci1q}{p}\le c
  \nrm{(\sum_{k=0}^\infty |f_k|^q)^\fracci1q}{p}.
  \label{max-ineq}
\end{equation}
For $u\in F^d_{p,1}$ and $f_k:=2^{kd}u_k$, 
the right hand side equals $c\norm{u}{F^d_{p,q}}$.
Taking a fixed $t<1$ such that $\tfrac{n}{t}<n+1$,
this inequality together with Proposition~\ref{JM-prop} will essentially
yield the proof of the theorem. 
 
In addition to \eqref{JM-ineq}, further estimates of $a(x,\xi)$ 
follow from the natural embeddings 
$W^{n+1}_1\hookrightarrow B^{n+1}_{1,\infty}\hookrightarrow 
\dot B^{n/t}_{1,t}$: clearly $2^{kd}\doteq (1+|2^k\eta|)^d$  on
$\supp\tilde\Phi$, since $\tfrac{1}{4}\le|\eta|\le4$,
and since $\Psi_k=\Phi_0+\dots+\Phi_k$,
\begin{equation}
  \begin{split}
  \Norm{\sum_{j=0}^{k-2}a_{j,k}(x,2^k\cdot)}{\dot B^{n/t}_{1,t}}
  &\le 
  \sum_{|\alpha|\le n+1}\Norm{D^\alpha_\xi(\Psi_{k-2}(D_x)a(x,2^{k+2}\cdot)
    \tilde\Phi)}{L_{1,\xi}}
\\
  &\le c \norm{\tilde\Phi}{W^{n+1}_1}
              \nrm{\check\Psi}{1} 2^{kd} \mu_{0,n+1}(a),
  \end{split}
\end{equation}
with $\mu_{l,m}(a)=\sup_{x,\xi;|\beta|\le l, |\alpha|\le m}
 (1+|\xi|)^{-(d-|\alpha|+|\beta|)}|D^\beta_xD^\alpha_\xi a(x,\xi)|$ 
as the seminorms defining the topology on $S^d_{1,1}$. 
Using \eqref{JM-ineq} for each summand in $a^{(1)}(x,D)u$, the
above estimate yields for $k$ in any subset of $\N$,
\begin{equation}
  \begin{split}
    \Nrm{\sum_k\sum_{j=0}^{k-2}a_{j,k}(x,D)u_k}{p}^p
  &\le \int |\sum_k 2^{kd} M_t u_k(x)  |^p\,dx
\\[-3\jot]
  &\qquad
  \times (\sup_{x,k}2^{-kd}
  \Norm{\sum_{j=0}^{k-2}a_{j,k}(x,2^{k+2}\cdot)}{\dot B^{n/t}_{1,t}})^p
\\
  &\le c \mu_{0,n+1}(a)^p \int (\sum_k 2^{kd}|u_k(x)|)^p\,dx.
  \end{split}
  \label{a1-ineq}
\end{equation}
It follows that the series defining $a^{(1)}(x,D)u$
is fundamental in $L_p$ when $u\in F^d_{p,1}(\Rn)$ for $1\le p<\infty$, 
and the same estimate with $k\in \N$ then gives, for $m=1$, 
\begin{equation}
  \nrm{a^{(m)}(x,D)u}{p}\le c \mu_{0,n+1}(a)\norm{u}{F^d_{p,1}}.
  \label{am-ineq}
\end{equation}
The sum $\sum_{j=0}^{k-2}$ may now be replaced by the one
pertinent for $a^{(2)}$, and then essentially the same argument 
yields \eqref{am-ineq} for $m=2$.

To handle $a^{(3)}$, note that 
$0=\int \check\Phi_j(y)y^\alpha\,dy$ for any multiindex $\alpha$ and
$j\ge 1$, so that Taylor's formula for $a(x-y, 2^k\xi)$, with $\xi$
fixed, gives
\begin{multline}
  \Phi_j(D_x)a(x,2^k\xi)=
  \sum_{|\alpha|=N} \tfrac{N}{\alpha!}
 \int (-y)^\alpha\check\Phi_j(y)
\\
  \times
 \int_0^1 (1-\tau)^{N-1}\partial^\alpha_{x} a(x-\tau y, 2^k\xi)\,d\tau dy.
\end{multline}
The factor $(-y)^\alpha$ can absorb a scaling by $2^{jN}$, 
since $\check\Phi_j(y)=2^{jn}\check\Phi(2^j y)$ is a Schwartz function, 
so by the same embeddings as before
\begin{multline}
  2^{jN-kd}\Norm{\Phi_j(D_x)a(x,2^{k+2}\cdot)\tilde\Phi}{\dot B^{n/t}_{1,t}}
\\
 \le c\sum_{\substack{|\alpha|\le n+1\\|\beta|=N}}
    \int |z^\beta\check\Phi(z)|\,dz
    \int (1+|\xi|)^N|D^{\alpha}\tilde\Phi|\,d\xi
     \cdot \mu_{N,n+1}(a).
\end{multline}
This implies that
\begin{equation}
  \begin{split}
  |\sum_{k=0}^{j-2}a_{j,k}(x,D)u_k(x)|&\le
  \sum_{k=0}^{j-2}\Norm{a_{j,k}(x,2^{k+2}\cdot)}{\dot B^{n/t}_{1,t}}
  M_t u_k(x)
\\
  &\le c \mu_{N,n+1}(a)2^{-jN}\sum_{k=0}^{j-2} 2^{kd}M_tu_k(x).
  \end{split}
\end{equation}
Combining this and \eqref{max-ineq} 
with $\sum_{j=0}^\infty 2^{sjq}(\sum_{k=0}^j
|b_j|)^q\le c\sum_{j=0}^\infty 2^{sjq}|b_j|^q$, which for $s<0$,
$0<q\le\infty$, holds for all numbers $b_j$,
cf~\cite[Lem.~3.8]{Y1},
\begin{equation}
  \begin{split}
  \nrm{\sum_{j}\sum_{k=0}^{j-2} a_{j,k}(x,D)u_k}{p}
  &\le c \mu_{N,n+1}(a) 
    \nrm{\sum_{j} 2^{-jN}
    (\sum_{k=0}^j 2^{kd}M_t u_k)}{p}
\\
  &\le c' \nrm{\sum_{j} 2^{(d-N)j} |u_j|}{p}
  \end{split}
  \label{a3-ineq}
\end{equation}
For $N=1$, say, it follows in the same way as above that the series for
$a^{(3)}(x,D)u$ converges in $L_p$; and \eqref{a3-ineq} implies
\eqref{am-ineq} for $m=3$. 
Altogether this yields $\nrm{a(x,D)u}{p}\le c
\mu_{1,n+1}(a)\norm{u}{F^d_{p,1}}$. 

The case $B^d_{\infty,1}(\Rn)$ is analogous, and the necessary
counterexamples were given in Lemma~\ref{cex-lem} above, so the proof of 
the theorem is complete.

\begin{rem}
Even for $p=2$, the above proof 
involves Lebesgue norms on $L_t$ with $0<t<1$ via $M_tu$
($t$ has to be less than the sum-exponent in $F^d_{p,1}$).
\end{rem}

\section{The general continuity properties}
  \label{cont-sect}
This section is devoted to the proof of the corollary and to that of
Proposition~\ref{supp-prop}. 
The main thing will be to prove that the spectra of the general terms
in the $a^{(j)}$-series in Section~\ref{brdl-sect} fulfil 
\begin{gather}
  \supp\cal F\bigl(\sum_{j=0}^{k-2} a_{j,k}(x,D)u_k\bigr)
  \subset
  \bigl\{\,\xi\in\Rn \bigm| \tfrac{1}{5}2^k\le 
                          |\xi|\le \tfrac{5}{4}2^k\,\bigr\}
  \tag{S1}\\
  \supp\cal F\bigl(\sum_{\substack{j,l=0,1\\ j+l\le1}} 
                         a_{k-j,k-l}(x,D)u_{k-l}\bigr)
  \subset
  \bigl\{\,\xi\in\Rn \bigm| 
                          |\xi|\le 4\cdot2^k\,\bigr\}
  \tag{S2}\\
  \supp\cal F\bigl(\sum_{k=0}^{j-2} a_{j,k}(x,D)u_k\bigr)
  \subset
  \bigl\{\,\xi\in\Rn \bigm| \tfrac{1}{5}2^j\le 
                          |\xi|\le \tfrac{5}{4}2^j\,\bigr\}  
  \tag{S3}
\end{gather}
In addition it will be seen that if \eqref{H-cnd} holds, ie for some $C\ge1$,
\begin{equation}
  \hat a(\xi,\eta) =0 \quad\text{where}\quad 
  C(|\xi+\eta|+1)\le |\eta|
  \label{H-cnd'}
\end{equation}
then \mbox{(S2)} may be supplemented by the property that
for $k$ large enough, the set on the left hand side of \mbox{(S2)} is
contained in an annulus,
\begin{equation}
    \supp\cal F\bigl(\sum_{\substack{j,l=0,1\\ j+l\le1}} 
                         a_{k-j,k-l}(x,D)u_{k-l}\bigr)
  \subset
  \bigl\{\,\xi \bigm| \tfrac{1}{4C}2^k\le
                          |\xi|\le 4\cdot2^k\,\bigr\}.
  \tag{S2'}
\end{equation}
 
However, granted that Proposition~\ref{supp-prop} holds,
the inclusions \mbox{(S1)--(S3)} are all easy:
if $\eta$ is in $\supp\hat u_{k}$ clearly 
$\tfrac{11}{20}2^{k}\le|\eta|\le \tfrac{13}{10}2^{k}$, and similarly
if $(\xi,\eta)$ is in the support of  
$\hat a_{j,k}=\Phi_{j}(\xi)\hat a(\xi,\eta)\tilde\Phi_{k}(\eta)$;
then Proposition~\ref{supp-prop} gives that
\begin{equation}
  \xi+\eta\in \supp \cal F a_{j,k}(x,D)u_{k}
  \implies
  |\xi+\eta|\le \tfrac{13}{10}(2^j+ 2^k).
\end{equation}
Since $j-k\in\{0,\pm 1\}$ for each $k$ in the $a^{(2)}$-series, 
$|\xi+\eta|\le\tfrac{39}{10}2^k$, so \mbox{(S2)} holds.
\mbox{(S1)} and \mbox{(S3)} are analogous. \mbox{(S2')}
is seen thus: given \eqref{H-cnd'}, the support rule yields for any
$(\xi,\eta)$ in $\supp \hat a_{j,k}$ so that $\xi+\eta$ is in
the support of $\cal F(a_{k-j,k-l}(x,D)u_{k-l})$, 
that $\eta\in \supp\hat u_{k-l}$, hence  
\begin{equation}
  |\xi+\eta|\ge \tfrac{1}{C} |\eta|-1\ge \tfrac{11}{20C}2^{k-l}-1
  \ge (\tfrac{11}{40C}-2^{-k})2^k;
\end{equation}
here the right hand side is larger than $\tfrac{1}{4C}2^k$ 
for $k>3+\log_2(5C)$. Combined with \mbox{(S2)} this shows \mbox{(S2')}.

\subsection{Proof of the support rule}
  \label{supp-ssect}
Note first that for $f(t)$ in the subspace of $\cal D'$-valued functions 
\begin{equation}
  f\in C(\Rn,\cal D'(\Rn))\subset\cal D'(\R^{2n})
\end{equation}
there is a natural trace at $t=0$ given by $f(0)\in \cal D'(\Rn)$. Moreover,
such $f$ act on $\varphi\in C^\infty_0(\R^{2n})$ by integration, ie if points
in $\R^{2n}$ are written $(t,x)$ for $t$, $x\in \Rn$,
\begin{equation}
  \dual{f}{\varphi}=\int_{\Rn} \dual{f(t)}{\varphi(t,\cdot)}_{\Rn}\,dt.
  \label{cntD'-eq}
\end{equation}
This elementary fact may be seen as in \cite[Prop.~3.5]{JJ00bsp}.

Since $K_2:=\supp\hat v\Subset\Rn$ while $K_1:=\supp \hat b$ is closed in 
$\R^{2n}$,
the set $K$ on the right hand side of \eqref{supp-eq} is closed.
Suppose first that $b$ is a Schwartz function and $\hat v\in C^\infty_0$. 
Then \eqref{Fbv-eq} holds.
To avoid a cumbersome regularisation of $b$ with control of the spectra of
the approximands, note that \eqref{Fbv-eq} also applies to
$b_\tau(x,\eta):=b(x,\eta-\tau)$ for $\tau\in\Rn$. So with the
partially reflected function $R\hat b_\tau(\xi,\eta)=\hat b(\xi,\tau-\eta)$,
and $M=\left(\begin{smallmatrix}
I&0\\-I&I\end{smallmatrix}\right)$, 
\begin{equation}
  \cal F(b_\tau(x,D)v)=\int R\hat b(\xi-\eta,\tau-\eta)\hat v(\eta)\,d\eta
  =R\hat b*((\hat v\otimes\delta_0)\circ M)(\xi,\tau)
  \label{Fbtv-eq}
\end{equation}
Since $\cal S(\R^{2n})\subset S^d_{1,0}(\Rn\times\Rn)$ is dense in the
topology of $S^{d+1}_{1,0}\hookrightarrow \cal S'(\Rn)$, 
and since the right hand side is in the set of convolutions $\cal D'*\cal
E'$ on $\R^{2n}$,
it follows that \eqref{Fbtv-eq}
holds for $b\in S^d_{1,0}$, and then for all $\hat v\in\cal E'(\Rn)$.

As functions of $\tau$, both sides are in
$C(\Rn,\cal D'(\Rn))\subset\cal D'(\R^{2n})$, for it is clear that the
continuity with respect to $\tau$ of the symbol $b_\tau\in S^d_{1,0}$ is
inherited by the left hand side. The right hand side of \eqref{Fbtv-eq} has
support in
\begin{equation}
  \tilde K=\bigl\{\,(\xi,-\theta)+(\eta,\eta) \bigm| (\xi,\theta)\in K_1,
            \eta\in K_2\,\bigr\},
\end{equation}
which is closed when $\hat v\subset\cal E'$; and 
$\tilde K\cap(\Rn\times\{0\})$ equals $K\times\{0\}$.  
So any $\varphi\in C^\infty_0(\Rn)$ with
support in $\Rn\setminus K$ yields a positive distance from
$\tilde K$ to $\supp\varphi\times\{0\}$; 
hence any $\psi\in C^\infty_0(\Rn)$ with $\int\psi=1$
will entail that, eventually, $\varphi(\xi)j^n\psi(j\tau)$ has 
support disjoint from $\tilde K$, hence by \eqref{cntD'-eq} that
\begin{equation}
  \begin{split}
  0&=\int \dual{R\hat b*((\hat v\otimes\delta_0)
     \circ M)(\cdot,\tau)}{\varphi}j^n\psi(j\tau)\,d\tau
\\
  &=\int \dual{\cal F(b_\tau(x,D)v)}{\varphi}j^n\psi(j\tau)\,d\tau
\\
  &\smash[b]{\xrightarrow[j\to\infty]{~}}
    \dual{\cal F(b(x,D)v)}{\varphi}.
  \end{split}
\end{equation}
Finally, for $b\in S^\infty_{1,1}$ it suffices to note that $b(x,D)$,
according to the proof of Proposition~\ref{E'-prop}, 
acts on $v$ as some operator with symbol in $S^{-\infty}$
for which the set $K$ is the same as for $b$.
This completes the proof of Proposition~\ref{supp-prop}.

\subsection{Proof of Corollary~\ref{cont-cor}}

Let for simplicity $u\in F^{s+d}_{p,q}$ and $q<\infty$. 
Mimicking \eqref{a1-ineq} one finds 
\begin{equation}
  \nrm{(\sum_{k=2}^\infty
  2^{skq}|\sum_{j=0}^{k-2}a_{j,k}(x,D)u_k|^q)^{\fracci1q}}{p}
  \le c \mu_{0,n+1}(a)\norm{u}{F^{s+d}_{p,q}}.  
\end{equation}
The conjunction of this estimate and the spectral property \mbox{(S1)}
implies, by the first part of Lemma~\ref{F-lem}, that for $m=1$
\begin{equation}
  \norm{a^{(m)}(x,D)u}{F^{s}_{p,q}}\le c \norm{u}{F^{s+d}_{p,q}}.
  \label{ams-ineq}
\end{equation}
For $m=3$ one can analogously combine \mbox{(S3)} with a
similar modification of \eqref{a3-ineq}, whereby $2^{-jN}$ gets replaced by
$2^{j(s-N)}$, now for $N>s$. 

Concerning $a^{(2)}$, it is easy to show in analogy with \eqref{a1-ineq}
that for the three possible combinations of $j$, $l$ one has
\begin{equation}
  \nrm{(\sum_{k=2}^\infty
  2^{skq}|a_{k-j,k-l}(x,D)u_{k-l}|^q)^{\fracci1q}}{p}
  \le c \mu_{0,n+1}(a)\norm{u}{F^{s+d}_{p,q}}.  
  \label{a2-ineq}
\end{equation}
In view of this, \mbox{(S2)} and the assumption $s>0$,
the criterion for series with spectra in balls (cf Lemma~\ref{F-lem})
gives \eqref{ams-ineq} for $m=2$.

Under the last assumption, \mbox{(S2')} yields for all large $k$ 
that the $k^{\op{th}}$ term in $a^{(2)}$ has spectrum in an annulus, so the
criterion for such series and \eqref{a2-ineq} apply; the remaining
finitely many terms all lie in $\bigcap_{\sigma>0} F^{\sigma}_{p,\infty}$
by the first part of Lemma~\ref{F-lem}. 
Hence \eqref{ams-ineq} holds for all $s\in \R$, all $m$. 

Finally \eqref{Bs-eq} is obtained analogously, and
this completes the proof.

\begin{rem}   \label{Runst-rem}
The mapping properties \eqref{Fs-eq} and
\eqref{Bs-eq} were announced by Runst \cite{Run85ex}, 
albeit with somewhat flawed proofs:
in connection with his Lemma~1 on the basic spectral estimates,
there is \cite[p.~20]{Run85ex} an explicit appeal to a formula like 
\eqref{Fbv-eq} above , but this is not quite enough when symbols in
$S^\infty_{1,1}$ and functions in, say $L_1^{\loc}\setminus\cal S$ are
treated simultaneously.
The same flaw seems to be present in Marschall's work, for
although the spectral properties are claimed in \cite{Mar96} 
without arguments, \eqref{Fbv-eq} was also appealed to in 
\cite[p.~495]{Mar91}. 
However, these shortcomings are only of a technical nature, and they may
easily be remedied by means of the support rule in
Proposition~\ref{supp-prop}, which has sufficiently weak assumptions.
\end{rem}

\section{Final remarks}
  \label{quasi-sect}

The $B^{s}_{p,q}$ and $F^{s}_{p,q}$ spaces have for half a century 
been treated from
many points of view, and it is known that they besides the $H^s_p$
also contain H{\"o}lder--Zygmund classes and other function spaces. The
books of Triebel \cite{T2,T3} account for this and describe the historic
development and priorities. Here the names Besov and Triebel--Lizorkin
spaces are used, since this seems to be common. Also the unifiying
definition by means of Littlewood--Paley decompositions, cf
Section~\ref{Lin-sect}, has been adopted for simplicity.

However, with these definitions, it is well known that the spaces make sense
also for $p$ and $q$ in the interval $\,]0,1[\,$ (although they are then
only quasi-Banach spaces); hence it should be natural to give a brief
treatment of these cases.

\subsection{Exponents $p$ and $q$ in $\,]0,1[\,$}

When extending the continuity results in Corollary~\ref{cont-cor} to 
$p$, $q$ in the full range $\,]0,\infty]$, the
first step could be to replace Lemma~\ref{F-lem} by the well-known criteria
in \cite[2.3.2/2]{RuSi96} or \cite[Thm.~3.7]{Y1}, which both require that 
$s>\max(0,\fracc np-n,\fracc nq-n)$. But there is a somewhat stronger result
showing that the value of $q$ only matters for the ``target space'', while
it is inconsequential for the mere convergence of the series:

\begin{lem}
  \label{Fspq-lem}
Let $s>\max(0,\fracc np-n)$ for $0<p<\infty$ and $0< q\le \infty$ and suppose
$u_j\in \cal S'(\Rn)$ such that, for some $A>0$,
\begin{equation}
  \supp\cal F u_j\subset B(0,A2^j),\qquad
  F(q):=
  \Nrm{(\sum_{j=0}^\infty 2^{sjq}|u_j(\cdot)|^q)^{\fracci1q}}{p}<\infty.
\end{equation}
Then $\sum_{j=0}^\infty u_j$ converges in $\cal S'(\Rn)$ to some $u\in
F^s_{p,r}(\Rn)$ for
\begin{equation}
  r\ge q,\quad r>\tfrac{n}{n+s},
  \label{r-eq}
\end{equation}
and then $\norm{u}{F^s_{p,r}}\le cF(r)$ for some $c>0$ depending on
$n$, $s$, $p$ and $r$.
\end{lem}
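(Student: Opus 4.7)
My plan proceeds in three stages: first, reduce to $q=r$ via $\ell^q$-monotonicity; second, establish absolute convergence of $\sum u_j$ in $\cal S'(\Rn)$; third, derive the Triebel--Lizorkin bound using Peetre's maximal inequality, moment cancellation of the Littlewood--Paley multipliers, and the Fefferman--Stein vector-valued maximal theorem.

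I would first note that $\ell^q\hookrightarrow\ell^r$ for $r\ge q$ gives $F(r)\le F(q)<\infty$, so the conclusion with $F(r)$ on the right is enough. For $\cal S'$-convergence, dominating the $j$th $\ell^r$-term by the whole yields $\nrm{u_j}{p}\le 2^{-sj}F(r)$; then, given $\varphi\in\cal S(\Rn)$, H\"older's inequality produces $|\dual{u_j}{\varphi}|\le c\,2^{-sj}F(r)\nrm{\varphi}{p'}$ when $p\ge 1$ (summable since $s>0$), while for $0<p<1$ the Plancherel--Polya inequality \cite[Thm.~1.4.1]{T2}, applicable via the ball spectrum condition, gives $\nrm{u_j}{1}\le c\,2^{jn(1/p-1)}\nrm{u_j}{p}\le c\,2^{j(n/p-n-s)}F(r)$, summable since $s>n/p-n$. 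Hence $u:=\sum u_j\in\cal S'(\Rn)$.

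For the norm estimate, I would select $t$ in the open interval $\bigl(n/(n+s),\min(p,r)\bigr)$, which is non-empty thanks to the hypotheses $s>n/p-n$ and $r>n/(n+s)$. Spectral disjointness $\supp\Phi_k\cap\supp\hat u_j=\emptyset$ for $j<k-C_A$ (with $C_A=\log_2(20A/11)$) reduces the Littlewood--Paley sum to $\Phi_k(D)u=\sum_{j\ge k-C_A}\Phi_k(D)u_j$ in $\cal S'$ for $k\ge 1$. Peetre's maximal inequality applied to the spectrally localised $u_j$ gives the basic bound $|\Phi_k(D)u_j(x)|\le c\,2^{(j-k)_+ n/t}M_tu_j(x)$; for $j<k$ this is refined by Taylor expansion of $u_j$ together with the vanishing moments of $\check\Phi_k$ (valid for $k\ge 1$) to $c\,2^{-(k-j)N}M_tu_j(x)$ with $N$ freely chosen. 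Setting $a_j(x):=2^{sj}M_tu_j(x)$ reformulates these as
\[
2^{sk}|\Phi_k(D)u(x)|\le c\sum_{m\in\Z}\eta_m\,a_{k+m}(x),
\]
with a geometrically decaying kernel $\eta$ once $N$ is large and $t$ is taken near $n/(n+s)$. An $\ell^r$-convolution estimate (Minkowski for $r\ge 1$, the quasi-triangle inequality $(x+y)^r\le x^r+y^r$ for $0<r\le 1$) followed by the Fefferman--Stein vector-valued maximal inequality---applicable since $t<\min(p,r)$---then bounds $\Nrm{(\sum_k 2^{skr}|\Phi_k(D)u|^r)^{1/r}}{L_p}$ by $c\,F(r)$, as required.

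The main obstacle is designing $\eta_m$ so that $\ell^r$-summability holds on the positive-$m$ side: the ball spectrum of $u_j$ (rather than an annulus) permits $\Phi_k(D)u_j\ne 0$ whenever $j\ge k-C_A$, so there is no automatic decay as $j\to\infty$ with $k$ fixed. The sharp hypothesis $r>n/(n+s)$ is what provides the slack to pick a Peetre exponent $t$ for which the summability on the $m>0$ side, together with the Fefferman--Stein admissibility $t<\min(p,r)$, can be arranged simultaneously.
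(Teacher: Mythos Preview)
Your opening reduction via $\ell^q\hookrightarrow\ell^r$ is correct and is in fact the \emph{entire} content of the paper's proof: having observed $F(r)\le F(q)<\infty$ for $r\ge q$, the paper simply invokes the standard convergence criterion (\cite[Prop.~2.3.2/2]{RuSi96} or \cite[Thm.~3.6--3.7]{Y1}) with the exponent $r$ in place of $q$, which is admissible precisely because $r>n/(n+s)$ together with the standing hypothesis $s>\max(0,n/p-n)$ gives $s>\max(0,n/p-n,n/r-n)$. The lemma is thus a two-line corollary of the cited result, and no fresh estimate is needed.

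Your attempt to re-derive that standard criterion from scratch, however, contains a real gap on the positive-$m$ side of the kernel. From your Peetre bound $|\Phi_k(D)u_j(x)|\le c\,2^{(j-k)_+n/t}M_tu_j(x)$ one obtains, for $m=j-k\ge 0$,
\[
\eta_m=2^{-sm}\cdot 2^{mn/t}=2^{m(n/t-s)},
\]
which decays only when $t>n/s$. Taking $t$ ``near $n/(n+s)$'' as you suggest does the opposite: then $n/t\approx n+s>s$ and $\eta_m\approx 2^{mn}$ \emph{grows}. The two constraints you actually need---$t>n/s$ for geometric decay of $\eta$ and $t<\min(p,r)$ for Fefferman--Stein---are simultaneously satisfiable only when $s>n/\min(p,r)$, which is strictly stronger than the hypothesis $s>n/\min(p,r)-n$. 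Concretely, with $n=1$, $p=r=\tfrac12$, $s=\tfrac32$ the assumptions of the lemma hold (since $\max(0,n/p-n)=1<\tfrac32$ and $r=\tfrac12>\tfrac{1}{1+3/2}=\tfrac25$), yet your scheme requires $\tfrac23<t<\tfrac12$, which is empty. So the convolution-kernel reduction you sketch does not close in the stated generality; the references cited above handle the ball case by a different mechanism, and the paper is content to quote them.
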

\begin{proof}
If $s>\max(0,\fracc np-n,\fracc nq-n)$, then $r=q$ is possible and the claim is
just the usual result. Otherwise $q\le\tfrac{n}{n+s}$, and $F(r)\le
F(q)<\infty$ for $r>q$; if also $r>\tfrac{n}{n+s}$ the standard result gives
the statement.
\end{proof}

Using this, Corollary~\ref{cont-cor} extends to $p$ and $q\in \,]0,\infty]$
as follows:

\begin{cor}
  \label{contpq-cor}
If $a\in S^d_{1,1}(\Rn\times\Rn)$ the corresponding operator $a(x,D)$ is
bounded for $s>\max(0,\fracc np-n)$, $0<p,q\le\infty$ and $r$ as in
\eqref{r-eq}, 
\begin{align}
  a(x,D)&\colon F^{s+d}_{p,q}(\Rn)\to F^s_{p,r}(\Rn)
   \quad (p<\infty);
  \label{Fspr-eq} \\
  a(x,D)&\colon B^{s+d}_{p,q}(\Rn)\to B^s_{p,q}(\Rn).
  \label{Bspq-eq}
\end{align}
If \eqref{H-cnd} holds, then \eqref{Fspr-eq} and \eqref{Bspq-eq} do so for
all $s\in \R$ and $r=q$.
\end{cor}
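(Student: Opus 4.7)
The plan is to rerun the proof of Corollary~\ref{cont-cor} in the quasi-Banach setting, substituting the quasi-Banach convergence criteria (ball version: Lemma~\ref{Fspq-lem}; annulus version: the corresponding statements in \cite{RuSi96, Y1}) for Lemma~\ref{F-lem}, and choosing the maximal-function exponent $t$ smaller than in the Banach range. Specifically, I would fix $t \in\, ]0,1[$ with $t < \min(p,q)$ so that the vector-valued Fefferman--Stein inequality \eqref{max-ineq} still applies in $L_p(\ell_q)$. Since $n/t$ may now be large, the Sobolev embedding $W^{N}_1 \hookrightarrow \dot B^{n/t}_{1,t}$ used in Marschall's inequality (Proposition~\ref{JM-prop}) forces one to take a correspondingly large integer $N$.

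With these adjustments, the pointwise and vector-valued estimates for $a^{(1)}$, $a^{(2)}$, and $a^{(3)}$ from Section~\ref{pf-ssect} carry through almost verbatim, producing
\begin{equation*}
\Bigl\| \Bigl(\sum_k 2^{skq}\,|\sigma_k(x)|^q\Bigr)^{\fracci 1q}\Bigr\|_p
\le c\,\mu_{N',N}(a)\,\norm{u}{F^{s+d}_{p,q}},
\end{equation*}
where $\sigma_k$ denotes the $k^{\text{th}}$ summand of $a^{(m)}(x,D)u$; the $a^{(3)}$-part uses the gain $2^{j(s-N')}$ with $N' > s$ and the Hardy-type sequence inequality from \cite[Lem.~3.8]{Y1}. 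Crucially, the spectral inclusions \rom{(S1)}, \rom{(S2)}, \rom{(S3)} and \rom{(S2')} depend only on Proposition~\ref{supp-prop} and are therefore unaffected by the values of $p$ and $q$.

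To convert the pointwise bounds into $F^s_{p,r}$-estimates, I apply the quasi-Banach convergence criteria. For $a^{(1)}$ and $a^{(3)}$ the summands have spectra in dyadic annuli by \rom{(S1)} and \rom{(S3)}, so the annulus criterion yields bounds valid for all $s \in \R$ and all admissible $r$. For $a^{(2)}$ the spectra lie only in dyadic balls; Lemma~\ref{Fspq-lem} then requires $s > \max(0, \fracnp - n)$, which is precisely the corollary's hypothesis, and it is here that the enlarged target index $r \ge q$ with $r > n/(n+s)$ becomes necessary. Under \eqref{H-cnd} the sharper inclusion \rom{(S2')} places all but finitely many terms of $a^{(2)}$ in dyadic annuli, so the annulus criterion applies; the remaining finite sum belongs to $\bigcap_{\sigma>0} F^{\sigma}_{p,\infty}$ and is harmless. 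This yields \eqref{Fspr-eq} for all $s \in \R$ with $r = q$.

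The main obstacle is the simultaneous calibration of the three free parameters: $t$ must be small enough for Fefferman--Stein in $L_p(\ell_q)$; $N$ must be large enough for the Besov embedding in Marschall's inequality; and $r$ must satisfy $r \ge q$ together with $r > n/(n+s)$ whenever the ball criterion is invoked. The restriction $s > \fracnp - n$ is precisely what renders these constraints compatible. Finally, the Besov version \eqref{Bspq-eq} is obtained by performing the $L_p$-norm before the $\ell_q$-sum throughout, using the Besov analogues of the convergence criteria.
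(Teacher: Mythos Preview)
Your proposal is correct and follows essentially the same route as the paper: choose $t<\min(p,q)$ so that \eqref{max-ineq} remains valid, raise the Sobolev order in the symbol estimates accordingly (the paper writes $W^{n+T}_1$ with $\tfrac nt<n+T$), keep the spectral inclusions \rom{(S1)}--\rom{(S3)} and \rom{(S2')} unchanged, and invoke Lemma~\ref{Fspq-lem} in place of Lemma~\ref{F-lem}, which is exactly where the passage from $q$ to $r$ enters for the $a^{(2)}$-contribution. The handling of the twisted-diagonal case via \rom{(S2')} plus the remark that the finitely many exceptional terms lie in $\bigcap_{\sigma>0}F^{\sigma}_{p,\infty}$ also matches the paper.
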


\begin{proof}
The proof of Corollary~\ref{cont-cor} is easily carried over with the same
relations as in \mbox{(S1)}--\mbox{(S3)}.
The estimates are now made for $t\in \,]0,\min(p,q)[\,$, so that
\eqref{max-ineq} still applies (cf \cite{Y1} eg); but taking $T>0$ such that
$\frac{n}{t}<n+T$ it follows that $W^{n+T}_1$-estimates of the
symbols suffice (they are controlled by the semi-norms $\mu_{0,n+T}(a)$ and
$\mu_{N,n+T}(a)$).

Moreover \eqref{ams-ineq} will need to
have $q$ replaced by $r$ on the left hand side when Lemma~\ref{Fspq-lem} is
invoked instead of Lemma~\ref{F-lem}. And when \eqref{H-cnd} holds,
\mbox{(S2')} still applies, with spectra in annuli except for a finite part
of $a^{(2)}(x,D)u$.
\end{proof}

\begin{rem}
  \label{Torr-rem}
On the one hand, Corollary~\ref{contpq-cor} improves \cite{Tor90} since the
assumption $s>\max(0,\fracc np-n)$ is weaker than his
$s>\max(0,\fracc np-n,\fracc nq-n)$ (the latter is a well-known requirement in
connection with Triebel--Lizorkin spaces, eg it occurs in many places in
\cite{T2, Y1, FJ2, Mar96, RuSi96}, but it is avoided by the sharper
statements in Lemma~\ref{Fspq-lem}).
And the condition \eqref{H-cnd} on the twisted diagonal has not
been extended to the full range of $F^{s}_{p,q}$- and $B^{s}_{p,q}$-spaces
before. On the other  hand, \cite{H88,H89,Tor90} also treat
the continuity from a specific space $F^{s}_{p,q}$ with sufficient
conditions of various kinds, even with some necessary conditions in
\cite{H88,H89}, cf also \cite{H97}; 
the reader is referred to these works for details. 
\end{rem}

\subsection{Acknowledgement}
My thanks are due both to prof.\ G.~Grubb and to prof.\ V.~Burenkov for their
interest in this work.

\providecommand{\bysame}{\leavevmode\hbox to3em{\hrulefill}\thinspace}

\end{document}